\def\dom{\mathop{\mathrm{Dom}}\nolimits}
\def\im{\mathop{\mathrm{Im}}\nolimits}
\def\d{\mathrm{d}}
\def\id{\mathrm{id}}
\def\N{\mathbb N}
\def\PT{\mathcal{PT}}
\def\T{\mathcal{T}}
\def\Sym{\mathcal{S}}
\def\DP{\mathcal{DP}}
\def\A{\mathcal{A}}
\def\B{\mathcal{B}}
\def\C{\mathcal{C}}
\def\D{\mathcal{D}}
\def\DPS{\mathcal{DPS}}
\def\DPC{\mathcal{DPC}}
\def\ODP{\mathcal{ODP}}
\def\POD{\mathcal{POD}}
\def\POR{\mathcal{POR}}
\def\I{\mathcal{I}} 
\def\ro{{\hspace{.2em}}\rho{\hspace{.2em}}}
\newtheorem{theorem}{Theorem}[section]
\newtheorem{proposition}[theorem]{Proposition}
\newtheorem{corollary}[theorem]{Corollary}
\newtheorem{lemma}[theorem]{Lemma}
\newenvironment{proof}{\begin{trivlist}\item[\hskip%
\labelsep{\bf Proof.}]}%
{\qed\rm\end{trivlist}}
\newcommand{\qed}{{\unskip\nobreak
\hfil\penalty50\hskip .001pt \hbox{}
          \nobreak\hfil
         \vrule height 1.2ex width 1.1ex depth -.1ex
           \parfillskip=0pt\finalhyphendemerits=0\medbreak}}
\newcommand{\lastpage}{\addresss}
\newcommand{\addresss}{\small \sf  
\noindent{\sc V\'\i tor H. Fernandes}, 
Center for Mathematics and Applications (CMA), 
FCT NOVA and Department of Mathematics, FCT NOVA, 
Faculdade de Ci\^encias e Tecnologia, 
Universidade Nova de Lisboa, 
Monte da Caparica, 
2829-516 Caparica, 
Portugal; 
e-mail: vhf@fct.unl.pt. 

\medskip

\noindent{\sc T\^ania Paulista}, 
Departamento de Matem\'atica, 
Faculdade de Ci\^encias e Tecnologia, 
Universidade NOVA de Lisboa, 
Monte da Caparica, 
2829-516 Caparica, 
Portugal; 
e-mail: t.paulista@campus.fct.unl.pt. 
}
\title{On the monoid of partial isometries of a cycle graph}
\author{V\'\i tor H. Fernandes\footnote{This work is funded by national funds through the FCT - Funda\c c\~ao para a Ci\^encia e a Tecnologia, I.P., under the scope of the projects UIDB/00297/2020 and UIDP/00297/2020 (Center for Mathematics and Applications).}~
and T\^ania Paulista 
}
\begin{document}

\maketitle

\begin{abstract}
In this paper we consider the monoid $\DPC_n$ of all partial isometries of a $n$-cycle graph $C_n$. 
We show that $\DPC_n$ is the submonoid of the monoid of all oriented partial permutations on a $n$-chain whose elements are precisely all restrictions of the dihedral group of order $2n$.  
Our main aim is to exhibit a presentation of $\DPC_n$. 
We also describe Green's relations of $\DPC_n$ and calculate its cardinal and rank.  
\end{abstract}

\medskip

\noindent{\small 2020 \it Mathematics subject classification: \rm 20M20, 20M05, 05C12, 05C25.} 

\noindent{\small\it Keywords: \rm transformations, orientation, partial isometries, cycle graphs, rank, presentations.} 

\section*{Introduction}\label{presection} 

Let $\Omega$ be a finite set. As usual, let us denote by $\PT(\Omega)$ the monoid (under composition) of all 
partial transformations on $\Omega$, by $\T(\Omega)$ the submonoid of $\PT(\Omega)$ of all 
full transformations on $\Omega$, by $\I(\Omega)$ 
the \textit{symmetric inverse monoid} on $\Omega$, i.e. 
the inverse submonoid of $\PT(\Omega)$ of all 
partial permutations on $\Omega$, 
and by $\Sym(\Omega)$ the \textit{symmetric group} on $\Omega$, 
i.e. the subgroup of $\PT(\Omega)$ of all 
permutations on $\Omega$. 

\smallskip 

Recall that the \textit{rank} of a (finite) monoid $M$ is the minimum size of a generating set of $M$, i.e. 
the minimum of the set $\{|X|\mid \mbox{$X\subseteq M$ and $X$ generates $M$}\}$. 

Let $\Omega$ be a finite set
with at least $3$ elements.
It is well-known that $\Sym(\Omega)$
has rank $2$ (as a semigroup, a monoid or a group) and 
$\T(\Omega)$, $\I(\Omega)$ and $\PT(\Omega)$ have
ranks $3$, $3$ and $4$, respectively.
The survey \cite{Fernandes:2002survey} presents 
these results and similar ones for other classes of transformation monoids,
in particular, for monoids of order-preserving transformations and
for some of their extensions. 
For example, the rank of the extensively studied monoid of all order-preserving transformations of a $n$-chain is $n$,  
which was proved by Gomes and Howie \cite{Gomes&Howie:1992} in 1992. 
More recently, for instance, the papers 
\cite{
Araujo&al:2015,
Fernandes&al:2014,
Fernandes&al:2019,
Fernandes&Quinteiro:2014,
Fernandes&Sanwong:2014} 
are dedicated to the computation of the ranks of certain classes of transformation semigroups or monoids.

\smallskip 

A \textit{monoid presentation} is an ordered pair 
$\langle A\mid R\rangle$, where $A$ is a set, often called an \textit{alphabet}, 
and $R\subseteq A^*\times A^*$ is a set of relations of 
the free monoid $A^*$ generated by $A$. 
A monoid $M$ is said to be 
\textit{defined by a presentation} $\langle A\mid R\rangle$ if $M$ is
isomorphic to $A^*/\rho_R$, where $\rho_R$ denotes the smallest
congruence on $A^*$ containing $R$. 

Given a finite monoid, it is clear that we can always exhibit
a presentation for it, at worst by enumerating all elements from its multiplication table,
but clearly this is of no interest, in general. So, by determining a
presentation for a finite monoid, we mean to find in some sense a
\textit{nice} presentation (e.g. with a small number of generators and
relations).

A presentation for the symmetric group $\Sym(\Omega)$ was determined by Moore \cite{Moore:1897} over a century ago (1897). 
For the full transformation monoid $\T(\Omega)$, a presentation  
was given in 1958 by A\u{\i}zen\v{s}tat \cite{Aizenstat:1958} in terms of a certain 
type of two generator presentation for the symmetric group $\Sym(\Omega)$, 
plus an extra generator and seven more relations. 
Presentations for the partial transformation monoid $\PT(\Omega)$ 
and for the symmetric inverse monoid $\I(\Omega)$
were found by Popova \cite{Popova:1961} in 1961. 
In 1962, A\u{\i}zen\v{s}tat \cite{Aizenstat:1962} and Popova \cite{Popova:1962} exhibited presentations for the monoids of 
all order-preserving transformations and of all order-preserving partial transformations of a finite chain, respectively, and from the sixties until our days several authors obtained presentations for many classes of monoids. 
See also \cite{Ruskuc:1995}, the survey \cite{Fernandes:2002survey} and, 
for example, 
\cite{Cicalo&al:2015,
East:2011, 
Feng&al:2019,
Fernandes:2000, 
Fernandes:2001, 
Fernandes&Gomes&Jesus:2004, 
Fernandes&Quinteiro:2016, 
Howie&Ruskuc:1995}. 

\medskip 

Now, let $G=(V,E)$ be a finite simple connected graph. 

The (\textit{geodesic}) \textit{distance} between two vertices $x$ and $y$ of $G$, denoted by $\d_G(x,y)$, is the length of a shortest path between $x$ and $y$, i.e. the number of edges in a shortest path between $x$ and $y$. 

Let $\alpha\in\PT(V)$. We say that $\alpha$ is a \textit{partial isometry} or \textit{distance preserving partial transformation} of $G$ if 
$$
\d_G(x\alpha,y\alpha) = \d_G(x,y) ,
$$
for all $x,y\in\dom(\alpha)$. Denote by $\DP(G)$ the subset of $\PT(V)$ of all partial isometries of $G$. Clearly, $\DP(G)$ is a submonoid of $\PT(V)$. Moreover, as a consequence of the property 
$$
\d_G(x,y)=0 \quad \text{if and only if} \quad x=y, 
$$
for all $x,y\in V$, it immediately follows that $\DP(G)\subseteq\I(V)$. Furthermore, $\DP(G)$ is an inverse submonoid of $\I(V)$ 
(see \cite{Fernandes&Paulista:2022arxiv}). 

\smallskip 

Observe that, if $G=(V,E)$ is a complete graph, i.e. $E=\{\{x,y\}\mid x,y\in V, x\neq y\}$, then $\DP(G)=\I(V)$. 

On the other hand, for $n\in\N$, consider the undirected path $P_n$ with $n$ vertices, i.e. 
$$
P_n=\left(\{1,\ldots,n\},\{\{i,i+1\}\mid i=1,\ldots,n-1\}\right).
$$
Then, obviously, $\DP(P_n)$ coincides with the monoid 
$$
\DP_n=\{\alpha\in\I(\{1,2,\ldots,n\}) \mid |i\alpha-j\alpha|=|i-j|, \mbox{for all $i,j\in\dom(\alpha)$}\}
$$
of all partial isometries on $\{1,2,\ldots,n\}$. 

The study of partial isometries on $\{1,2,\ldots,n\}$ was initiated 
by Al-Kharousi et al.~\cite{AlKharousi&Kehinde&Umar:2014,AlKharousi&Kehinde&Umar:2016}. 
The first of these two papers is dedicated to investigating some combinatorial properties of 
the monoid $\DP_n$ and of its submonoid $\ODP_n$ of all order-preserving (considering the usual order of $\N$) partial isometries, in particular, their cardinalities. The second paper presents the study of some of their algebraic properties, namely Green's structure and ranks. Presentations for both the monoids $\DP_n$ and $\ODP_n$ were given by the first author and Quinteiro in \cite{Fernandes&Quinteiro:2016}. 

The monoid $\DPS_n$ of all partial isometries of a star graph with $n$ vertices ($n\geqslant1$) was considered by the authors in 
\cite{Fernandes&Paulista:2022arxiv}. They determined the rank and size of $\DPS_n$ as well as described its Green's relations. A presentation for $\DPS_n$ was also exhibited in \cite{Fernandes&Paulista:2022arxiv}. 

\smallskip 

Now, for $n\geqslant3$, consider the \textit{cycle graph} 
$$
C_n=(\{1,2,\ldots, n\}, \{\{i,i+1\}\mid i=1,2,\ldots,n-1\}\cup\{\{1,n\}\}) 
$$
with $n$ vertices.  Notice that, cycle graphs and cycle subgraphs play a fundamental role in Graph Theory. 

\smallskip 

This paper is devoted to studying the monoid $\mathcal{DP}(C_n)$ of all partial isometries of $C_n$, 
which from now on we denote simply by $\DPC_n$. Observe that $\DPC_n$ is an inverse submonoid of the symmetric inverse monoid $\I_n$. 

\smallskip 

In Section \ref{basics} we start by giving a key characterization of $\DPC_n$, which allows for significantly simpler proofs of various results presented later. 
Also in this section, a description of the Green's relations of $\DPC_n$ is given and the rank and the cardinal of $\DPC_n$ are calculated. 
Finally, in Section \ref{presenta},  
we determine a presentation for the monoid $\DPC_n$ on $n+2$ generators,
from which we deduce another presentation for $\DPC_n$ on $3$ generators. 

\smallskip 

For general background on Semigroup Theory and standard notations, we refer to Howie's book \cite{Howie:1995}.

\smallskip 

We would like to point out that we made use of computational tools, namely GAP \cite{GAP4}.

\section{Some properties of $\DPC_n$} \label{basics} 

We begin this section by introducing some concepts and notation. 

For $n\in\N$, let $\Omega_n$ be a set with $n$ elements. 
As usual, we denote $\PT(\Omega_n)$, 
$\I(\Omega_n)$ and $\Sym(\Omega_n)$ 
simply by $\PT_n$, 
$\I_n$ and $\Sym_n$, respectively. 

Let $\alpha\in\PT_n$. Recall that the \textit{rank} of $\alpha$ is the size of $\im(\alpha)$.  

Next, suppose that $\Omega_n$ is a chain, e.g. $\Omega_n=\{1<2<\cdots<n\}$. 

A partial transformation $\alpha\in\PT_n$ is called \textit{order-preserving}  
[\textit{order-reversing}] if $x\leqslant y$ implies $x\alpha\leqslant y\alpha$
[$x\alpha\geqslant y\alpha$], for all $x,y \in \dom(\alpha)$.  
It is clear that the product of two order-preserving or of two order-reversing transformations is order-preserving and 
the product of an order-preserving transformation by an
order-reversing transformation, or vice-versa, is 
order-reversing. 
We denote by $\POD_n$ the submonoid of $\PT_n$
whose elements are all order-preserving or order-reversing transformations. 

Let $s=(a_1,a_2,\ldots,a_t)$
be a sequence of $t$ ($t\geqslant0$) elements
from the chain $\Omega_n$. 
We say that $s$ is \textit{cyclic} 
[\textit{anti-cyclic}] if there
exists no more than one index $i\in\{1,\ldots,t\}$ such that
$a_i>a_{i+1}$ [$a_i<a_{i+1}$],
where $a_{t+1}$ denotes $a_1$.
Notice that, the sequence $s$ is cyclic
[anti-cyclic] if and only if $s$ is empty or there exists
$i\in\{0,1,\ldots,t-1\}$ such that 
$a_{i+1}\leqslant a_{i+2}\leqslant \cdots\leqslant a_t\leqslant a_1\leqslant \cdots\leqslant a_i $ 
[$a_{i+1}\geqslant a_{i+2}\geqslant \cdots\geqslant a_t\geqslant a_1\geqslant \cdots\geqslant a_i $] (the index
$i\in\{0,1,\ldots,t-1\}$ is unique unless $s$ is constant and
$t\geqslant2$). We also say that $s$ is \textit{oriented} if $s$ is cyclic or $s$ is anti-cyclic. 
See \cite{Catarino&Higgins:1999,Higgins&Vernitski:2022,McAlister:1998}. 
Given a partial transformation $\alpha\in\PT_n$ such that
$\dom(\alpha)=\{a_1<\cdots<a_t\}$, with $t\geqslant0$, we 
say that $\alpha$ is \textit{orientation-preserving} 
[\textit{orientation-reversing}, \textit{oriented}] if the sequence of its images
$(a_1\alpha,\ldots,a_t\alpha)$ is cyclic [anti-cyclic, oriented].  
It is easy to show 
that the product of two orientation-preserving or of two
orientation-reversing transformations is orientation-preserving and
the product of an orientation-preserving transformation by an
orientation-reversing transformation, or vice-versa, is orientation-reversing. 
We denote by $\POR_n$ the submonoid of $\PT_n$ of all oriented  transformations. 

Notice that $\POD_n\cap\I_n$ and $\POR_n\cap\I_n$ are inverse submonoids of $\I_n$. 

\smallskip 

Let us consider the following permutations of $\Omega_n$ of order $n$ and $2$, respectively: 
$$
g=\begin{pmatrix} 
1&2&\cdots&n-1&n\\
2&3&\cdots&n&1
\end{pmatrix} 
\quad\text{and}\quad  
h=\begin{pmatrix} 
1&2&\cdots&n-1&n\\
n&n-1&\cdots&2&1
\end{pmatrix}. 
$$

It is clear that $g,h\in\POR_n\cap\I_n$. 
Moreover, for $n\geqslant3$, $g$ together with $h$ generate the well-known \textit{dihedral group} $\D_{2n}$ of order $2n$ 
(considered as a subgroup of $\Sym_n$). In fact, for $n\geqslant3$, 
$$
\D_{2n}=\langle g,h\mid g^n=1,h^2=1, hg=g^{n-1}h\rangle=\{1,g,g^2,\ldots,g^{n-1}, h,hg,hg^2,\ldots,hg^{n-1}\} 
$$
and we have 
$$
g^k=\begin{pmatrix} 
1&2&\cdots&n-k&n-k+1&\cdots&n\\
1+k&2+k&\cdots&n&1&\cdots&k
\end{pmatrix}, 
\quad\text{i.e.}\quad  
ig^k=\left\{\begin{array}{lc}
i+k & 1\leqslant i\leqslant n-k\\
i+k-n & n-k+1\leqslant i\leqslant n , 
\end{array}\right.
$$
and 
$$
hg^k=\begin{pmatrix} 
1&\cdots&k&k+1&\cdots&n\\
k&\cdots&1&n&\cdots&k+1
\end{pmatrix}, 
\quad\text{i.e.}\quad  
ihg^k=\left\{\begin{array}{lc}
k-i+1 & 1\leqslant i\leqslant k\\
n+k-i+1 & k+1\leqslant i\leqslant n , 
\end{array}\right.
$$
for $0\leqslant k\leqslant n-1$. 
Observe that, for $n\in\{1,2\}$, the dihedral group $\D_{2n}=\langle g,h\mid g^n=1, h^2=1, hg=g^{n-1}h\rangle$ of order $2n$ 
(also known as the \textit{Klein four-group} for $n=2$) cannot be considered as a subgroup of $\Sym_n$. 
Denote also by $\C_n$ the \textit{cyclic group} of order $n$ generated by $g$, i.e. 
$\C_n=\{1,g,g^2,\ldots,g^{n-1}\}$.  

\medskip 

Until the end of this paper, we will consider $n\geqslant3$. 

\smallskip 

Now, notice that, clearly, we have 
$$
\d_{C_n}(x,y)=\min \{|x-y|,n-|x-y|\} 
= \left\{ \begin{array}{ll}
 |x-y| &\mbox{if  $|x-y|\leqslant\frac{n}{2}$}\\
n-|x-y| &\mbox{if $|x-y|>\frac{n}{2}$} 
\end{array} \right.
$$ 
and so $0\leqslant\d_{C_n}(x,y)\leqslant\frac{n}{2}$,
for all $x,y \in \{1,2,\ldots,n\}$. 

From now on, for any two vertices $x$ and $y$ of $C_n$, we denote the distance $\d_{C_n}(x,y)$ simply by  $\d(x,y)$. 

Let $x,y \in \{1,2,\ldots,n\}$. Observe that 
$$
\d(x,y)=\frac{n}{2}
\quad\Leftrightarrow\quad
|x-y|=\frac{n}{2}
\quad\Leftrightarrow\quad
n-|x-y|=\displaystyle\frac{n}{2}
\quad\Leftrightarrow\quad
|x-y|=n-|x-y|, 
$$
in which case $n$ is even, and 
\begin{equation}\label{d1}
|\left\{z\in \{1,2,\ldots,n\}\mid \d(x,z)=d\right\}|=
 \left\{ \begin{array}{ll}
1 &\mbox{if  $d=\frac{n}{2}$}\\
2 &\mbox{if $d<\frac{n}{2}$,} 
\end{array} \right.
\end{equation}
for all $1\leqslant d \leqslant\frac{n}{2}$. Moreover, it is a routine matter to show that 
$$
D=\left\{z\in \{1,2,\ldots,n\}\mid \d(x,z)=d\right\}=\left\{z\in \{1,2,\ldots,n\}\mid \d(y,z)=d'\right\}
$$
implies 
\begin{equation}\label{d2}
\d(x,y)=\left\{ \begin{array}{ll}
\mbox{$0$ (i.e. $x=y$)} &\mbox{if  $|D|=1$}\\
\frac{n}{2} &\mbox{if $|D|=2$,} 
\end{array} \right.
\end{equation}
for all $1\leqslant d,d' \leqslant\frac{n}{2}$. 

\medskip 

Recall that $\DP_n$ is an inverse submonoid of $\POD_n\cap\I_n$. This is an easy fact to prove and was observed by Al-Kharousi et al. in  \cite{AlKharousi&Kehinde&Umar:2014,AlKharousi&Kehinde&Umar:2016}. A similar result is also valid for $\DPC_n$ and $\POR_n\cap\I_n$,
as we will deduce below. 

First, notice that, it is easy to show that both permutations $g$ and $h$ of $\Omega_n$ belong to $\DPC_n$ 
and so the dihedral group $\D_{2n}$ is contained in $\DPC_n$. 
Furthermore, as we prove next, the elements of $\DPC_n$ are precisely the restrictions of the permutations of the dihedral group $\D_{2n}$.  
This is a key characterization of $\DPC_n$ that will allow us to prove in a simpler way some of the results that we present later in this paper. 
Observe that 
$$
\alpha=\sigma|_{\dom(\alpha)} 
\quad\Leftrightarrow\quad
\alpha=\id_{\dom(\alpha)} \sigma
\quad\Leftrightarrow\quad
\alpha=\sigma\id_{\im(\alpha)},  
$$
for all $\alpha\in\PT_n$ and $\sigma\in\I_n$. 

\begin{lemma}\label{fundlemma} 
Let $\alpha \in \PT_n$. Then $\alpha \in\DPC_n$ if and only if there exists $\sigma \in \D_{2n}$ 
such that $\alpha=\sigma|_{\dom(\alpha)}$. 
Furthermore, for $\alpha \in \DPC_n$, one has: 
\begin{enumerate} 
\item If either $|\dom(\alpha)|= 1$ or $|\dom(\alpha)|= 2$ and $\d(\min \dom(\alpha),\max \dom(\alpha))=\frac{n}{2}$ 
(in which case $n$ is even), 
then there exist exactly two (distinct) permutations $\sigma,\sigma' \in\D_{2n}$ such that $\alpha= \sigma|_{\dom(\alpha)} = \sigma'|_{\dom(\alpha)}$;

\item If either $|\dom(\alpha)|= 2$ and $\d(\min \dom(\alpha),\max \dom(\alpha)) \neq \frac{n}{2}$ or $|\dom(\alpha)|\geqslant 3$, 
then there exists exactly one permutation $\sigma \in\mathcal{D}_{2n}$ such that $\alpha= \sigma|_{\dom(\alpha)}$.
\end{enumerate}
\end{lemma}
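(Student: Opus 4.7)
The plan is to prove the equivalence by explicit construction, and extract the multiplicity counts from a case analysis on $|\dom(\alpha)|$. The easy direction is immediate: once I verify from the distance formula for $C_n$ that $g,h\in\DPC_n$, I obtain $\D_{2n}=\langle g,h\rangle\subseteq\DPC_n$, so any $\alpha=\sigma|_{\dom(\alpha)}$ with $\sigma\in\D_{2n}$ satisfies $\d(x\alpha,y\alpha)=\d(x\sigma,y\sigma)=\d(x,y)$ for $x,y\in\dom(\alpha)$, and hence belongs to $\DPC_n$.

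For the converse and the counts, fix $\alpha\in\DPC_n$. If $|\dom(\alpha)|=1$, transitivity of $\D_{2n}$ on $\{1,\ldots,n\}$ produces an extension, and since every point-stabilizer in $\D_{2n}$ has order $|\D_{2n}|/n=2$, there are exactly two extensions. If $|\dom(\alpha)|=2$, write $\dom(\alpha)=\{a_1,a_2\}$ and $b_i=a_i\alpha$. When $\d(a_1,a_2)=n/2$, the target $b_2$ is the unique antipode of $b_1$, so both extensions of $a_1\mapsto b_1$ automatically send $a_2$ to $b_2$, yielding exactly two extensions. When $\d(a_1,a_2)<n/2$, the same two candidates send $a_2$ to the two distinct points at distance $\d(a_1,a_2)$ from $b_1$, of which exactly one is $b_2$, so the extension is unique.

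The case $|\dom(\alpha)|\geqslant 3$ is the heart of the proof. Since each vertex has at most one antipode, I can choose $a_1,a_2\in\dom(\alpha)$ with $\d(a_1,a_2)<n/2$, and the previous case provides a unique $\sigma\in\D_{2n}$ with $\sigma|_{\{a_1,a_2\}}=\alpha|_{\{a_1,a_2\}}$. For any other $a\in\dom(\alpha)$, both $a\alpha$ and $a\sigma$ lie in the intersection of the two spheres $S_i=\{z\in\{1,\ldots,n\}:\d(z,a_i\sigma)=\d(a,a_i)\}$, $i=1,2$. Each $S_i$ has size at most two by (\ref{d1}), while $S_1=S_2$ is ruled out by (\ref{d2}): it would force either $a_1\sigma=a_2\sigma$ (impossible, as $\sigma$ is injective and $a_1\neq a_2$) or $\d(a_1\sigma,a_2\sigma)=n/2$ (contradicting $\d(a_1\sigma,a_2\sigma)=\d(a_1,a_2)<n/2$). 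Hence $|S_1\cap S_2|\leqslant 1$, forcing $a\alpha=a\sigma$. Uniqueness of $\sigma$ in $\D_{2n}$ then follows directly from the uniqueness already established in the 2-point case with distance less than $n/2$.

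The main obstacle is this last sphere-intersection step. It depends on the initial selection of reference points $a_1,a_2$ at distance strictly less than $n/2$ (possible only because each vertex of $C_n$ has at most one antipode, so a three-element subset cannot be pairwise antipodal), and on a careful application of (\ref{d1}) and (\ref{d2}) to block the coincidence $S_1=S_2$, which is precisely what allows $\sigma$ to be promoted from a partial agreement on $\{a_1,a_2\}$ to a global extension of $\alpha$.
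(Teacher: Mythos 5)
Your proof is correct and takes essentially the same route as the paper: $\D_{2n}\subseteq\DPC_n$ gives the easy direction, and the converse is handled by counting extensions of rank-$1$ maps, splitting rank $2$ according to whether the domain is an antipodal pair, and, for rank $\geqslant 3$, picking a non-antipodal pair in the domain and using (\ref{d1}) and (\ref{d2}) to force every remaining point to agree with the unique extension of that pair. The only differences are cosmetic: you invoke orbit--stabilizer for the rank-$1$ count where the paper computes explicit exponents, your ``at most one antipode'' observation replaces the paper's gap-sum argument for finding a non-antipodal pair in the domain, and your claim that the two candidate extensions send $a_2$ to distinct points when $\d(a_1,a_2)<\frac{n}{2}$ is asserted rather than verified (the paper checks it by a short congruence computation), though it is indeed a routine one-line check.
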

\begin{proof}
Let $\alpha \in \PT_n$. 

\smallskip 

If $\alpha=\sigma|_{\dom(\alpha)}$, for some $\sigma \in \D_{2n}$, then $\alpha\in\DPC_n$, since $\D_{2n}\subseteq\DPC_n$ 
and, clearly, any restriction of an element of $\DPC_n$ also belongs to $\DPC_n$. 

\smallskip

Conversely, let us suppose that $\alpha\in\DPC_n$. 

First, observe that, for each pair $1\leqslant i,j\leqslant n$, there exists a unique $k\in\{0,1,\ldots,n-1\}$ such that $ig^k=j$ and 
there exists a unique $\ell\in\{0,1,\ldots,n-1\}$ such that $ihg^\ell=j$. In fact, for $1\leqslant i,j\leqslant n$ and $k,\ell\in\{0,1,\ldots,n-1\}$, 
it is easy to show that: 
\begin{description}
\item if $i\leqslant j$ then $ig^k=j$ if and only if $k=j-i$; 

\item if $i>j$ then $ig^k=j$ if and only if $k=n+j-i$; 

\item if $i+j\leqslant n$ then $ihg^\ell=j$ if and only if $\ell=i+j-1$; 

\item if $i+j > n$ then $ihg^\ell=j$ if and only if $\ell=i+j-1-n$. 
\end{description}
Therefore, we may conclude immediately that:
\begin{enumerate}
\item any nonempty transformation of $\DPC_n$ has at most two extensions in $\D_{2n}$ and, if there are two distinct,
one must be an orientation-preserving transformation and the other an orientation-reversing transformation;

\item any transformation of $\DPC_n$ with rank $1$ has two distinct extensions in $\D_{2n}$ 
(one being an orientation-preserving transformation and the other an orientation-reversing transformation). 
\end{enumerate} 

Notice that, as $g^n=g^{-n}=1$, we also have $ig^{j-i}=j$ and $ihg^{i+j-1}=j$, for all $1\leqslant i,j\leqslant n$. 

\smallskip 

Next, suppose that $\dom(\alpha)=\{i_1,i_2\}$. 
Then, there exist $\sigma\in\C_n$ and $\xi\in\D_{2n}\setminus\C_n$ (both unique) such that 
$i_1\sigma=i_1\alpha=i_1\xi$. 
Take 
$D=\left\{z\in \{1,2,\ldots,n\}\mid \d(i_1\alpha,z)=\d(i_1,i_2)\right\}$. 
Then $1\leqslant |D|\leqslant 2$ and $i_2\alpha,i_2\sigma,i_2\xi\in D$. 

Suppose that $i_2\sigma=i_2\xi$ and let $j_1=i_1\sigma$ and $j_2=i_2\sigma$. Then 
$\sigma=g^{j_1-i_1}=g^{j_2-i_2}$ and $\xi=hg^{i_1+j_1-1}=hg^{i_2+j_2-1}$. 
Hence, we have $j_1-i_1=j_2-i_2$ or $j_1-i_1=j_2-i_2\pm n$, from the first equality, 
and $i_1+j_1=i_2+j_2$ or $i_1+j_1=i_2+j_2\pm n$, from the second. 
Since $i_1\neq i_2$ and $i_2-i_1\neq n$, 
it a routine matter to conclude that the only possibility is to have $i_2-i_1=\frac{n}{2}$ (in which case $n$ is even). Thus $\d(i_1,i_2)=\frac{n}{2}$. 
By (\ref{d1}) it follows that $|D|=1$ and so $i_2\alpha=i_2\sigma=i_2\xi$, i.e. $\alpha$ is extended by both $\sigma$ and $\xi$. 

If $i_2\sigma\neq i_2\xi$ then $|D|=2$ (whence $\d(i_1,i_2)<\frac{n}{2}$) and so either $i_2\alpha=i_2\sigma$ or $i_2\alpha=i_2\xi$. 
In this case, $\alpha$ is extended by exactly one permutation of $\D_{2n}$. 

\smallskip 

Now, suppose that $\dom(\alpha)=\{i_1<i_2<\cdots <i_k\}$, for some $3\leqslant k\leqslant n-1$. 
Since $\sum_{p=1}^{k-1}(i_{p+1}-i_p) = i_k-i_1<n$, then 
there exists at most one index $1\leqslant p\leqslant k-1$ such that $i_{p+1}-i_p\geqslant\frac{n}{2}$. 
Therefore, we may take $i,j\in\dom(\alpha)$ such that $i\neq j$ and $\d(i,j)\neq\frac{n}{2}$ and so, 
as $\alpha|_{\{i,j\}}\in\DPC_n$, by the above deductions, there exists a unique $\sigma\in\D_{2n}$ such that 
$\sigma|_{\{i,j\}}=\alpha|_{\{i,j\}}$. 
Let $\ell\in\dom(\alpha)\setminus\{i,j\}$. 
Then 
$$
\ell\alpha,\ell\sigma\in \left\{z\in \{1,2,\ldots,n\}\mid \d(i\alpha,z)=\d(i,\ell)\right\}\cap\left\{z\in \{1,2,\ldots,n\}\mid \d(j\alpha,z)=\d(j,\ell)\right\}.
$$ 
In order to obtain a contradiction, suppose that $\ell\alpha\neq\ell\sigma$. 
Therefore, by (\ref{d1}), we have  
$$
\left\{z\in \{1,2,\ldots,n\}\mid \d(i\alpha,z)=\d(i,\ell)\right\} = 
\left\{\ell\alpha,\ell\sigma\right\}= 
\left\{z\in \{1,2,\ldots,n\}\mid \d(j\alpha,z)=\d(j,\ell)\right\}
$$
and so, by (\ref{d2}), $\d(i,j)=\d(i\alpha,j\alpha)=\frac{n}{2}$, which is a contradiction. 
Hence $\ell\alpha=\ell\sigma$. Thus $\sigma$ is the unique permutation of $\D_{2n}$ such that 
$\alpha= \sigma|_{\dom(\alpha)}$, as required.
\end{proof} 

Bearing in mind the previous lemma, it seems appropriate to designate $\DPC_n$ by \textit{dihedral inverse monoid} on $\Omega_n$.

\smallskip 

Since $\D_{2n}\subseteq\POR_n\cap\I_n$, which contains all the restrictions of its elements, we have immediately: 

\begin{corollary}\label{dpcpopi} 
The monoid $\DPC_n$ is contained in $\POR_n\cap\I_n$. 
\end{corollary}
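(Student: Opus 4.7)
The plan is to chain together three observations, each already either stated or immediate from the setup of the paper. Since $\DPC_n \subseteq \I_n$ was noted at the outset (partial isometries are injective), the only real content is showing $\DPC_n \subseteq \POR_n$.

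First I would invoke Lemma \ref{fundlemma} to reduce the problem: every $\alpha \in \DPC_n$ has the form $\alpha = \sigma|_{\dom(\alpha)}$ for some $\sigma \in \D_{2n}$. So it suffices to establish the two facts (a) $\D_{2n} \subseteq \POR_n$, and (b) the set $\POR_n \cap \I_n$ is closed under restriction to subsets of the domain.

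For (a), I would read off the explicit formulas for $g^k$ and $hg^k$ given just before Lemma \ref{fundlemma}. The image sequence of $g^k$ on $(1,2,\ldots,n)$ is $(1+k,2+k,\ldots,n,1,2,\ldots,k)$, which is cyclic (it has a single descent, at position $n-k$), so $g^k$ is orientation-preserving. Likewise, the image sequence of $hg^k$ is $(k,k-1,\ldots,1,n,n-1,\ldots,k+1)$, which is anti-cyclic (a single ascent, at position $k$), so $hg^k$ is orientation-reversing. Hence every element of $\D_{2n}$ is oriented.

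For (b), the crucial observation is that if $(a_1,\ldots,a_t)$ is a cyclic (respectively anti-cyclic) sequence, then any subsequence obtained by deleting some entries is again cyclic (anti-cyclic): the characterization in terms of a cyclic rotation to a weakly increasing (decreasing) sequence is obviously inherited by subsequences. Therefore, restricting $\sigma \in \D_{2n}$ to any subset of its domain yields an oriented partial permutation, and we conclude $\alpha \in \POR_n \cap \I_n$. I do not anticipate any serious obstacle here; the work is entirely in Lemma \ref{fundlemma}, and this corollary is essentially a packaging statement.
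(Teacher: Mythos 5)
Your proposal is correct and is essentially the paper's own argument: the paper derives the corollary immediately from Lemma \ref{fundlemma} together with the facts that $\D_{2n}\subseteq\POR_n\cap\I_n$ and that $\POR_n\cap\I_n$ contains all restrictions of its elements, which are exactly your steps (a) and (b). You merely spell out in more detail the verifications (the single descent/ascent in the image sequences of $g^k$ and $hg^k$, and closure of oriented sequences under taking subsequences) that the paper leaves as immediate.
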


Observe that, as $\D_{2n}$ is the group of units of $\POR_n\cap\I_n$ (see \cite{Fernandes&Gomes&Jesus:2004,Fernandes&Gomes&Jesus:2009}), 
then $\D_{2n}$ also has to be the group of units of $\DPC_n$. 

\medskip 

Next, recall that, given an inverse submonoid $M$ of $\I_n$, it is well known that 
the Green's relations $\mathscr{L}$, $\mathscr{R}$ and $\mathscr{H}$
of $M$ can be described as following: for $\alpha, \beta \in M$,
\begin{itemize}
\item $\alpha \mathscr{L} \beta$ if and only if $\im(\alpha) = \im(\beta)$;

\item $\alpha \mathscr{R} \beta$ if and only if $\dom(\alpha) = \dom(\beta)$;

\item $\alpha \mathscr{H} \beta $ if and only if $\im(\alpha) = \im(\beta)$ and $\dom(\alpha) = \dom(\beta)$.
\end{itemize}
In $\I_n$ we also have 
\begin{itemize}
\item $\alpha \mathscr{J} \beta$ if and only if $|\dom(\alpha)| = |\dom(\beta)|$ (if and only if $|\im(\alpha)| = |\im(\beta)|$). 
\end{itemize}

Since $\DPC_n$ is an inverse submonoid of $\I_n$, 
it remains to describe its Green's relation $\mathscr{J}$. 
In fact, it is a routine matter to show that: 

\begin{proposition} \label{greenJ}
Let $\alpha, \beta \in \DPC_n$. Then $\alpha \mathscr{J} \beta$ if and only if one of the following properties is satisfied:
\begin{enumerate}
\item $|\dom(\alpha)|=|\dom(\beta)|\leqslant1$;
\item $|\dom(\alpha)|=|\dom(\beta)|=2$ and $\d(i_1,i_2)=\d(i'_1,i'_2)$, 
where $\dom(\alpha)=\{i_1,i_2\}$ and $\dom(\beta)=\{i'_1,i'_2\}$; 
\item $|\dom(\alpha)|=|\dom(\beta)|=k\geqslant3$ and there exists $\sigma\in\D_{2k}$ such that 
$$
\begin{pmatrix} 
i'_1&i'_2&\cdots&i'_k\\
i_{1\sigma}&i_{2\sigma}&\cdots&i_{k\sigma}
\end{pmatrix} \in\DPC_n, 
$$
where $\dom(\alpha)=\{i_1<i_2<\dots<i_k\}$ and $\dom(\beta)=\{i'_1<i'_2<\cdots<i'_k\}$.  
\end{enumerate}
\end{proposition}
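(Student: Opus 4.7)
The plan is to reduce the statement to a question about the existence of a partial isometry bijection between $\dom(\alpha)$ and $\dom(\beta)$, and then to analyze such bijections case by case via Lemma~\ref{fundlemma}.

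Since $\DPC_n$ is a finite inverse submonoid of $\I_n$, the relations $\mathscr{J}$ and $\mathscr{D}$ coincide on $\DPC_n$. Combining this with the descriptions of $\mathscr{L}$ and $\mathscr{R}$ recalled just before the statement, $\alpha\,\mathscr{J}\,\beta$ is equivalent to the existence of $\gamma\in\DPC_n$ with $\dom(\gamma)=\dom(\alpha)$ and $\im(\gamma)=\im(\beta)$. Because $\beta^{-1}\in\DPC_n$, right-multiplying such a $\gamma$ by $\beta^{-1}$ produces a partial isometry bijection $\theta:\dom(\alpha)\to\dom(\beta)$, and conversely any such $\theta$ yields $\gamma=\theta\beta$ with the required domain and image. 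Hence it suffices to characterize when there is a partial isometry bijection between $\dom(\alpha)$ and $\dom(\beta)$; in particular both sides force $|\dom(\alpha)|=|\dom(\beta)|=:k$.

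The low-rank cases are then direct. For $k\leqslant 1$, any bijection between the (empty or singleton) domains is vacuously distance-preserving, and belongs to $\DPC_n$ by exhibiting a suitable power of $g$ as an extension, yielding (1). For $k=2$, the only distance that must be preserved is the single nonzero distance on the domain, so the condition $\d(i_1,i_2)=\d(i'_1,i'_2)$ is both necessary and sufficient; the resulting two-point bijection extends to some element of $\D_{2n}$ because $\D_{2n}$ acts transitively on ordered pairs of vertices at any fixed distance. This gives (2).

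The substantive case is $k\geqslant 3$. By Lemma~\ref{fundlemma}, any partial isometry $\theta$ with domain $\dom(\beta)$ is the restriction of a unique $\pi\in\D_{2n}$, and it is a bijection onto $\dom(\alpha)$ iff $\dom(\beta)\pi=\dom(\alpha)$. The key observation is that every rotation $\pi=g^s$ preserves, and every reflection $\pi=hg^s$ reverses, the cyclic order of $\{1,2,\ldots,n\}$ inherited from $C_n$; this cyclic order restricts to the cyclic orders $(i'_1,\ldots,i'_k)$ on $\dom(\beta)$ and $(i_1,\ldots,i_k)$ on $\dom(\alpha)$. Consequently, the permutation $\sigma$ of $\{1,\ldots,k\}$ defined by $i'_p\pi=i_{p\sigma}$ is a cyclic shift in the rotation case and a reversed cyclic shift in the reflection case, so $\sigma\in\D_{2k}$. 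This gives the ``only if'' direction of (3); the converse is immediate, as for such a $\sigma$ the transformation in (3) is by definition the desired partial isometry bijection.

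The main obstacle I expect is the combinatorial bookkeeping in the last step: converting the abstract fact that $g^s$ and $hg^s$ preserve or reverse the cyclic order of $C_n$ into the concrete conclusion that the induced position-permutation on $\{1,\ldots,k\}$ lies in $\D_{2k}$. The subtlety is that the linear order on $\{1,\ldots,n\}$ does not match the cyclic order when $\pi$ ``crosses'' the edge $\{1,n\}$, so identifying the correct cyclic shift or reflection requires a careful reindexing of $\dom(\alpha)$ relative to $\dom(\beta)\pi$. Once this identification is set up properly, the rotation and reflection cases each reduce to a routine computation.
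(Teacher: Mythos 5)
Your argument is correct, and there is no proof in the paper to compare it with: the authors state Proposition \ref{greenJ} as ``a routine matter to show''. Your route --- $\mathscr{J}=\mathscr{D}$ in a finite (inverse) monoid, the domain/image descriptions of $\mathscr{R}$ and $\mathscr{L}$ to reduce $\alpha\,\mathscr{J}\,\beta$ to the existence of a partial isometry bijection between $\dom(\alpha)$ and $\dom(\beta)$, and then Lemma \ref{fundlemma} to write such a bijection as a restriction of some $\pi\in\D_{2n}$ and read off the induced position permutation $\sigma\in\D_{2k}$ --- is precisely the routine argument the authors presumably intend, and all the key steps are present and sound. Two minor remarks: in the rank-$2$ case the appeal to transitivity of $\D_{2n}$ on ordered pairs at a fixed distance is superfluous, since $\DPC_n$ is by definition the set of \emph{all} distance-preserving partial injections, so preserving the single nonzero distance already gives membership; and in the rank-$\geqslant 3$ case the bookkeeping you flag is genuinely the only remaining work, but it is straightforward: writing $\pi=g^s$ and splitting $\dom(\beta)=\{i'_1<\cdots<i'_k\}$ according to whether $i'_p\leqslant n-s$ shows that $\sigma$ is the shift $p\mapsto p+q \pmod{k}$ for the appropriate $q$, and a reflection $\pi=hg^s$ analogously yields $p\mapsto c-p \pmod{k}$, so $\sigma\in\D_{2k}$ as claimed.
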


An alternative description of $\mathscr{J}$ can be found in second author's M.Sc.~thesis \cite{Paulista:2022}. 

\medskip 

Next, we count the number of elements of $\DPC_n$. 

\begin{theorem}
One has $|\DPC_n| = n2^{n+1}-\frac{(-1)^n+5}{4}n^2-2n+1$. 
\end{theorem}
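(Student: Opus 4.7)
The plan is to count by extensions using Lemma \ref{fundlemma}. Consider the surjection
$$
\Phi: \D_{2n} \times 2^{\{1,\dots,n\}} \longrightarrow \DPC_n, \qquad (\sigma,D)\longmapsto \sigma|_D;
$$
surjectivity is exactly the forward direction of Lemma \ref{fundlemma}, while the domain of $\Phi$ has cardinality $|\D_{2n}|\cdot 2^n = n\cdot 2^{n+1}$. For each $\alpha\in\DPC_n$, the fibre $\Phi^{-1}(\alpha)$ has size $e(\alpha):=\#\{\sigma\in\D_{2n}\mid \sigma|_{\dom(\alpha)}=\alpha\}$, since $D$ is forced to equal $\dom(\alpha)$. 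Summing fibre sizes gives
$$
n\cdot 2^{n+1} \;=\; \sum_{\alpha\in\DPC_n} e(\alpha) \;=\; |\DPC_n| \;+\; \sum_{\alpha\in\DPC_n}\bigl(e(\alpha)-1\bigr),
$$
so the task reduces to evaluating the excess $\sum(e(\alpha)-1)$ over those $\alpha$ having more than one dihedral extension.

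By Lemma \ref{fundlemma}, $e(\alpha)-1>0$ only in three situations. First, the empty partial transformation is the restriction of every element of $\D_{2n}$, so it contributes $2n-1$. Second, every partial map of rank $1$ is automatically a partial isometry (the distance condition is vacuous on a singleton domain), giving $n^2$ such $\alpha$, each with $e(\alpha)=2$ by Lemma \ref{fundlemma}(i), hence a total contribution of $n^2$. Third, for $n$ even, Lemma \ref{fundlemma}(i) also flags the rank $2$ partial isometries with antipodal domain $\{i_1,i_2\}$, i.e.\ $\d(i_1,i_2)=n/2$; there are $n/2$ such domains, and for each the value $i_1\alpha$ can be chosen freely ($n$ ways) while $i_2\alpha$ is forced by (\ref{d1}) to be the unique point at distance $n/2$ from $i_1\alpha$, producing $n^2/2$ maps each contributing $1$. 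For every other $\alpha$, Lemma \ref{fundlemma}(ii) yields $e(\alpha)=1$, so the contribution is zero.

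Summing the three counts gives an excess of $(2n-1)+n^2+\varepsilon_n\cdot n^2/2$, where $\varepsilon_n=0$ if $n$ is odd and $\varepsilon_n=1$ if $n$ is even; a one-line check rewrites this as $(2n-1)+\frac{(-1)^n+5}{4}n^2$. Substituting into $|\DPC_n| = n\cdot 2^{n+1}-\sum(e(\alpha)-1)$ yields the announced closed form. The only genuine point of care is the separation of the antipodal rank $2$ case from the generic rank $2$ case; this is exactly the dichotomy encoded in items (i) and (ii) of Lemma \ref{fundlemma}, so with that lemma in hand the whole argument is essentially parity-sensitive bookkeeping.
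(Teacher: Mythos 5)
Your argument is correct. It rests on the same key fact as the paper's proof (Lemma \ref{fundlemma}), but organizes the count differently: you count all pairs $(\sigma,D)\in\D_{2n}\times 2^{\{1,\ldots,n\}}$ at once, getting $n2^{n+1}$, and then subtract the overcount $\sum_\alpha(e(\alpha)-1)$, whose only contributions come from the empty map ($2n-1$), the $n^2$ rank-one maps (each with two extensions), and, for even $n$, the $n^2/2$ rank-two maps with antipodal domain (your count of these agrees with the paper's set $\B_2$). The paper instead partitions $\DPC_n$ by domain size, computes $|\A_i|=2n\binom{n}{i}$ for $i\geqslant 3$ (and the corrected value of $|\A_2|$, $|\A_1|=n^2$, $|\A_0|=1$), and sums the binomial series. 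The two routes involve identical bookkeeping of which elements have two dihedral extensions; yours buys a slightly cleaner final computation because $n2^{n+1}$ appears immediately and no summation of $\sum_{i=2}^{n}\binom{n}{i}$ is needed, while the paper's version yields the per-rank cardinalities $|\A_i|$ as a by-product. One small presentational point: the paper's Lemma \ref{fundlemma} states its dichotomy only for nonempty $\alpha$, so your treatment of the empty map (every element of $\D_{2n}$ restricts to it, giving $e(\emptyset)=2n$) is an extra, though trivial, observation that you correctly supply rather than quote.
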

\begin{proof}
Let $\A_i=\{\alpha\in\DPC_n\mid |\dom(\alpha)|=i\}$, for $i=0,1,\ldots,n$.  
Since the sets $\A_0,\A_1,\ldots,\A_n$ are pairwise disjoints, 
we get $|\DPC_n|=\sum_{i=0}^{n} |\A_i|$. 

Clearly, $\A_0=\{\emptyset\}$ and $\A_1=\{\binom{i}{j}\mid  1\leqslant i,j\leqslant n\}$, 
whence $|\A_0|=1$ and $|\A_1|=n^2$. Moreover, for $i\geqslant3$, by Lemma \ref{fundlemma}, 
we have as many elements in $\A_i$ as there are restrictions of rank $i$ of permutations of $\D_{2n}$, i.e. we have 
$\binom{n}{i}$ distinct elements of $\A_i$ for each permutation of $\D_{2n}$, whence $|\A_i|=2n\binom{n}{i}$. 
Similarly, for an odd $n$, by Lemma \ref{fundlemma}, we have $|\A_2|=2n\binom{n}{2}$. 
On the other hand, if $n$ is even, also by Lemma \ref{fundlemma}, 
we have as many elements in $\A_2$ as there are restrictions of rank $2$ of permutations of $\D_{2n}$ 
minus the number of elements of $\A_2$ that have two distinct extensions in $\D_{2n}$, i.e. 
$|\A_2|=2n\binom{n}{2}-|\B_2|$, where 
$$
\B_2=\{\alpha\in\DPC_n\mid |\mbox{$\dom(\alpha)|=2$ and $\d(\min \dom(\alpha),\max \dom(\alpha))=\frac{n}{2}$}\}. 
$$
It is easy to check that 
$$
\B_2=\left\{
\begin{pmatrix} 
i&i+\frac{n}{2}\\
j&j+\frac{n}{2}
\end{pmatrix},
\begin{pmatrix} 
i&i+\frac{n}{2}\\
j+\frac{n}{2}&j
\end{pmatrix}
\mid 
1\leqslant i,j\leqslant \frac{n}{2}
\right\},
$$
whence $|\B_2|=2(\frac{n}{2})^2=\frac{1}{2}n^2$. 
Therefore
$$
|\DPC_n|= 
\left\{\begin{array}{ll} 
1+n^2+2n\sum_{i=2}^{n}\binom{n}{i} & \mbox{if $n$ is odd}
\\\\
1+n^2+2n\sum_{i=2}^{n}\binom{n}{i} -\frac{1}{2}n^2 & \mbox{if $n$ is even}
\end{array}\right. 
= 
\left\{\begin{array}{ll} 
n2^{n+1}-n^2-2n+1 & \mbox{if $n$ is odd}
\\\\
n2^{n+1}-\frac{3}{2}n^2-2n+1 & \mbox{if $n$ is even}, 
\end{array}\right. 
$$
as required. 
\end{proof}

\medskip

We finish this section by deducing that $\DPC_n$ has rank $3$.

Let 
$$
e_i=\id_{\Omega_n\setminus\{i\}}=
\begin{pmatrix} 1&\cdots&i-1&i+1&\cdots&n\\
1&\cdots&i-1&i+1&\cdots&n
\end{pmatrix}\in\DPC_n, 
$$
for $i=1,2,\ldots,n$. Clearly, for $1\leqslant i,j\leqslant n$, we have $e_i^2=e_i$ and $e_ie_j=\id_{\Omega_n\setminus\{i,j\}}=e_je_i$. 
More generally, for any $X\subseteq\Omega_n$, we get $\Pi_{i\in X}e_i=\id_{\Omega_n\setminus X}$. 

Now, take $\alpha\in\DPC_n$. Then, by Lemma \ref{fundlemma}, $\alpha=h^ig^j|_{\dom(\alpha)}$, 
for some $i\in\{0,1\}$ and $j\in\{0,1,\ldots,n-1\}$. 
Hence $\alpha=h^ig^j\id_{\im(\alpha)}=h^ig^j\Pi_{k\in\Omega_n\setminus\im(\alpha)}e_k$. 
Therefore
$
\{g,h,e_1,e_2,\ldots,e_n\}
$
is a generating set of $\DPC_n$. Since 
$e_j=hg^{j-1}e_nhg^{j-1}$ for all $j\in\{1,2,\ldots,n\}$, 
it follows that $\{g,h,e_n\}$ is also a generating set of $\DPC_n$. 
As $\D_{2n}$ is the group of units of $\DPC_n$, which is a group with rank $2$, 
the monoid $\DPC_n$ cannot be generated by less than three elements. So, we have: 

\begin{theorem}
The rank of the monoid $\DPC_n$ is $3$.
\end{theorem}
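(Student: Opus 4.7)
The plan is the standard two-part argument: exhibit a generating set of size $3$, then show no generating set of size $2$ exists.

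For the upper bound I would introduce the partial identities $e_i=\id_{\Omega_n\setminus\{i\}}$ for $i=1,\ldots,n$, each of which lies in $\DPC_n$ as a restriction of the identity. Products of these commute and satisfy $\prod_{i\in X}e_i=\id_{\Omega_n\setminus X}$ for any $X\subseteq\Omega_n$. Then, invoking Lemma \ref{fundlemma}, every $\alpha\in\DPC_n$ factors as $\alpha=\sigma|_{\dom(\alpha)}=\sigma\cdot\id_{\im(\alpha)}=\sigma\cdot\prod_{k\in\Omega_n\setminus\im(\alpha)}e_k$ for some $\sigma\in\D_{2n}$. Since $\D_{2n}=\langle g,h\rangle$, this proves that $\{g,h,e_1,\ldots,e_n\}$ generates $\DPC_n$.

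To reduce to three generators, I would observe that for each $j\in\{1,\ldots,n\}$ the permutation $hg^{j-1}\in\D_{2n}$ is an involution interchanging $j$ with $n$ (and permuting the remaining points), so its conjugation action on the idempotent $e_n$ should produce $e_j$. A direct check of the action on $\Omega_n$ confirms the identity $e_j=hg^{j-1}\,e_n\,hg^{j-1}$, which eliminates $e_1,\ldots,e_{n-1}$ from the generating set and leaves $\{g,h,e_n\}$ as a generating set of size $3$.

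For the lower bound I would appeal to the general fact that in any finite monoid $M$ with group of units $G$, every generating set $X$ of $M$ has the property that $X\cap G$ generates $G$, because in a finite monoid a product is a unit precisely when each of its factors is a unit. Here $G=\D_{2n}$ has rank $2$ as a group (its classical Moore-type presentation uses the two generators $g,h$), and $\DPC_n\setminus\D_{2n}\neq\emptyset$ forces at least one non-unit generator as well. Combining, any generating set of $\DPC_n$ has cardinality at least $2+1=3$, completing the proof. I expect no substantial obstacle: the hard structural work is already carried out in Lemma \ref{fundlemma}, and the only small verification needed is the conjugation identity $e_j=hg^{j-1}e_nhg^{j-1}$.
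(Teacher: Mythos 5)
Your proposal is correct and follows essentially the same route as the paper: the generating set $\{g,h,e_1,\ldots,e_n\}$ obtained from Lemma \ref{fundlemma} via $\alpha=\sigma\,\id_{\im(\alpha)}$, the reduction to $\{g,h,e_n\}$ through the conjugation identity $e_j=hg^{j-1}e_nhg^{j-1}$, and the lower bound from the fact that the group of units $\D_{2n}$ has rank $2$ while a non-unit generator is also required. The only cosmetic remark is that for the lower bound you need rank of $\D_{2n}$ at least $2$ (non-cyclicity), which the two-generator presentation you cite does not itself establish, though it is of course standard.
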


\section{Presentations for $\DPC_n$}\label{presenta}

In this section, we aim to determine a presentation for $\DPC_n$. 
In fact, we first determine a presentation of $\DPC_n$ on $n+2$ generators and then, 
by applying applying \textit{Tietze transformations}, we deduce a presentation for $\DPC_n$ on $3$ generators. 

\smallskip 

We begin this section by recalling some notions related to the concept of a monoid presentation.

\smallskip 

Let $A$ be an alphabet and consider the free monoid $A^*$ generated by $A$. 
The elements of $A$ and of $A^*$ are called \textit{letters} and \textit{words}, respectively. 
The empty word is denoted by $1$ and we write $A^+$ to express $A^*\setminus\{1\}$.  
A pair $(u,v)$ of $A^*\times A^*$ is called a
\textit{relation} of $A^*$ and it is usually represented by $u=v$. 
To avoid confusion, given $u, v\in A^*$, we will write $u\equiv v$, instead
of  $u=v$, whenever we want to state precisely that $u$ and $v$
are identical words of $A^*$.  
A relation $u=v$ of $A^*$ is said to be a \textit{consequence} of $R$ if $u{\hspace{.11em}}\rho_R{\hspace{.11em}}v$. 

Let $X$ be a generating set of $M$ and let $\phi: A\longrightarrow M$ be an injective mapping 
such that $A\phi=X$. 
Let $\varphi: A^*\longrightarrow M$ be the (surjective) homomorphism of monoids that extends $\phi$ to $A^*$. 
We say that $X$ satisfies (via $\varphi$) a relation $u=v$ of $A^*$ if $u\varphi=v\varphi$. 
For more details see
\cite{Lallement:1979} or \cite{Ruskuc:1995}. 

A direct method to find a presentation for a monoid
is described by the following well-known result (e.g.  see \cite[Proposition 1.2.3]{Ruskuc:1995}).  

\begin{proposition}\label{provingpresentation} 
Let $M$ be a monoid generated by a set $X$, let $A$ be an alphabet 
and let $\phi: A\longrightarrow M$ be an injective mapping 
such that $A\phi=X$. 
Let $\varphi:A^*\longrightarrow M$ be the (surjective) homomorphism 
that extends $\phi$ to $A^*$ and let $R\subseteq A^*\times A^*$.
Then $\langle A\mid R\rangle$ is a presentation for $M$ if and only
if the following two conditions are satisfied:
\begin{enumerate}
\item
The generating set $X$ of $M$ satisfies (via $\varphi$) all the relations from $R$;  
\item 
If $u,v\in A^*$ are any two words such that 
the generating set $X$ of $M$ satisfies (via $\varphi$) the relation $u=v$ then $u=v$ is a consequence of $R$.
\end{enumerate} 
\end{proposition}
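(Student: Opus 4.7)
The plan is to work directly from the definition: by hypothesis, the monoid $M$ is defined by $\langle A\mid R\rangle$ precisely when there exists an isomorphism $\psi:A^*/\rho_R\longrightarrow M$ that respects the chosen generators, in the sense that composing with the natural projection $\pi:A^*\longrightarrow A^*/\rho_R$ recovers the homomorphism $\varphi$, i.e.\ $\varphi=\psi\circ\pi$. Reformulating the conclusion this way reduces the proposition to a clean statement about kernels of homomorphisms of free monoids, which then splits naturally into the two implications.

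For the forward direction, I would assume such an isomorphism $\psi$ exists. Since $\psi$ is injective, $\ker\varphi=\ker(\psi\circ\pi)=\ker\pi=\rho_R$ as congruences on $A^*$. Condition (1) then follows at once: for every $(u,v)\in R$ we have $u\,\rho_R\,v$, hence $u\varphi=v\varphi$, so $X$ satisfies the relation $u=v$. Condition (2) is also immediate from the equality $\ker\varphi=\rho_R$: any relation $u=v$ satisfied by $X$ gives $u\varphi=v\varphi$, whence $u\,\rho_R\,v$, which is exactly what it means for $u=v$ to be a consequence of $R$.

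For the backward direction, I would use conditions (1) and (2) to build the isomorphism. Condition (1) says $R\subseteq\ker\varphi$, and since $\ker\varphi$ is a congruence on $A^*$ while $\rho_R$ is the smallest such containing $R$, we obtain $\rho_R\subseteq\ker\varphi$. The universal property of the quotient $A^*/\rho_R$ then yields a unique homomorphism $\psi:A^*/\rho_R\longrightarrow M$ with $\psi\circ\pi=\varphi$. Condition (2) supplies the reverse inclusion $\ker\varphi\subseteq\rho_R$, so $\ker\varphi=\rho_R$ and $\psi$ is injective. Surjectivity of $\psi$ is inherited from the surjectivity of $\varphi$, which holds because $A\phi=X$ generates $M$. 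Hence $\psi$ is an isomorphism and $\langle A\mid R\rangle$ is a presentation of $M$.

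The statement is standard universal-algebra bookkeeping, so there is no genuine technical obstacle; the only delicate point is making explicit, at the outset, that the notion of a monoid being \emph{defined by} a presentation carries with it the requirement that the isomorphism $A^*/\rho_R\cong M$ sends the class of each letter $a\in A$ to $a\phi\in X$. Once that compatibility is built into the definition, the two implications reduce to the universal property of $A^*/\rho_R$ and the first isomorphism theorem for monoids, both applied to $\varphi$.
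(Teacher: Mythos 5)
Your argument is correct, and it is essentially the standard one: the paper itself gives no proof of this proposition, quoting it as a well-known fact from Ru\v{s}kuc's thesis, so your write-up simply supplies the routine verification that the cited source contains. Both directions are handled properly: the forward implication via $\ker\varphi=\rho_R$ obtained from injectivity of the isomorphism, and the converse via the universal property of $A^*/\rho_R$ (condition (1) giving $\rho_R\subseteq\ker\varphi$, condition (2) the reverse inclusion, surjectivity coming from $A\phi=X$ generating $M$). You were also right to flag the one genuinely delicate point: with the paper's bare definition (``$M$ is isomorphic to $A^*/\rho_R$''), the ``only if'' direction would fail, since an abstract isomorphism need not respect the chosen generators; the proposition is true precisely when, as you make explicit, the isomorphism is required to satisfy $\varphi=\psi\circ\pi$, i.e.\ the presentation is taken in terms of the generating set $X$. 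That is the intended reading in the literature, and with it your proof is complete.
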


\smallskip

Given a presentation for a monoid, another method to find a new
presentation consists in applying Tietze transformations. For a
monoid presentation $\langle A\mid R\rangle$, the 
four \emph{elementary Tietze transformations} are:

\begin{description}
\item(T1)
Adding a new relation $u=v$ to $\langle A\mid R\rangle$,
provided that $u=v$ is a consequence of $R$;
\item(T2)
Deleting a relation $u=v$ from $\langle A\mid R\rangle$,
provided that $u=v$ is a consequence of $R\backslash\{u=v\}$;
\item(T3)
Adding a new generating symbol $b$ and a new relation $b=w$, where
$w\in A^*$;
\item(T4)
If $\langle A\mid R\rangle$ possesses a relation of the form
$b=w$, where $b\in A$, and $w\in(A\backslash\{b\})^*$, then
deleting $b$ from the list of generating symbols, deleting the
relation $b=w$, and replacing all remaining appearances of $b$ by
$w$.
\end{description}

The next result is well-known (e.g. see \cite{Ruskuc:1995}): 

\begin{proposition} \label{tietze}
Two finite presentations define the same monoid if and only if one
can be obtained from the other by a finite number of elementary
Tietze transformations $(T1)$, $(T2)$, $(T3)$ and $(T4)$.  
\end{proposition}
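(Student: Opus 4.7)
The plan is to prove the two implications separately, the easy direction first and then the constructive ``only if'' direction.

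For the ``if'' direction, I would verify that each of the four elementary Tietze transformations applied to a presentation $\langle A\mid R\rangle$ produces a presentation of a monoid isomorphic to $A^*/\rho_R$. For (T1) and (T2), the point is that if $u=v$ is a consequence of $R$, then the congruence $\rho_R$ already contains the pair $(u,v)$, hence $\rho_R = \rho_{R\cup\{u=v\}}$; so the defined monoid is unchanged. For (T3) and (T4), I would exhibit an explicit isomorphism between $A^*/\rho_R$ and $(A\cup\{b\})^*/\rho_{R\cup\{b=w\}}$, namely the map induced by the inclusion $A\hookrightarrow A\cup\{b\}$, whose inverse is induced by the substitution $b\mapsto w$ and $a\mapsto a$ for $a\in A$. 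These checks are routine.

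For the ``only if'' direction, suppose $\langle A\mid R\rangle$ and $\langle B\mid S\rangle$ both present the same monoid $M$, via surjective homomorphisms $\varphi\colon A^*\to M$ and $\psi\colon B^*\to M$. The idea is to transform $\langle A\mid R\rangle$ stepwise into $\langle B\mid S\rangle$, passing through a common ``large'' presentation on the alphabet $A\sqcup B$. More precisely, I would proceed in the following stages: first, for each $b\in B$ choose a word $w_b\in A^*$ with $w_b\varphi = b\psi$, and repeatedly apply (T3) to obtain the presentation $\langle A\sqcup B\mid R\cup R_1\rangle$ where $R_1=\{b=w_b\mid b\in B\}$. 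Second, for each $a\in A$ choose a word $v_a\in B^*$ with $v_a\psi = a\varphi$ and, using (T1), adjoin the relations $R_2=\{a=v_a\mid a\in A\}$; each such relation is a consequence of $R\cup R_1$ because both sides represent the same element of $M$ under the surjection $(A\sqcup B)^*\to M$ induced by $\varphi$ and $\psi$, and that surjection factors through $(A\sqcup B)^*/\rho_{R\cup R_1}$. Third, adjoin $S$ by (T1), using that every relation in $S$ is a consequence of $R\cup R_1\cup R_2$ (each word $s\in B^*$ equals, modulo $R_1$ and $R_2$, a word in $A^*$, and the images coincide in $M$, hence are $\rho_R$-equivalent). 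Finally, reverse the process: remove $R$ via (T2) (its relations are consequences of $S\cup R_1\cup R_2$ by the symmetric argument), then remove the generators $A$ one by one via (T4) using the relations $R_2$, which also eliminates $R_1$ (each $w_b$, written in $A$, becomes a word in $B$).

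The main subtlety, and where I would spend most of the care, is Stage 3 and the first half of the reverse process: one must verify that all the relevant relations really are consequences of the relations already available. The clean way to handle this is to argue at the level of congruences on $(A\sqcup B)^*$. After Stages 1--2, the congruence $\rho_{R\cup R_1\cup R_2}$ equals the kernel of the homomorphism $(A\sqcup B)^*\to M$ that sends $a\mapsto a\varphi$ and $b\mapsto b\psi$, because both presentations define $M$ and $R_1,R_2$ identify the two copies of each element. Once this equality of congruences is established, consequence-checks for Stages 3 and 4 are immediate. A secondary technical point is that (T4) requires a relation of the form $a=v_a$ with $v_a\in (A\sqcup B\setminus\{a\})^*$; since $v_a\in B^*$ by construction, this is fine, and after removing all letters of $A$ the relations $R, R_1, R_2$ are either consumed or have become consequences of $S$, so a final clean-up by (T2) yields exactly $\langle B\mid S\rangle$.
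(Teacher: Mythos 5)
Your argument is correct: it is the standard proof of the Tietze theorem for monoid presentations. Note that the paper itself gives no proof of this proposition at all --- it is stated as well known, with a citation to Ru\v{s}kuc's thesis, and your proposal is essentially the argument found in that reference: the easy verification that each of (T1)--(T4) preserves the defined monoid up to isomorphism, and, for the converse, the passage through the joint presentation on $A\sqcup B$ with the bridging relations $b=w_b$ and $a=v_a$. The one point worth making fully explicit is the key congruence identity you invoke: the kernel of the canonical map $(A\sqcup B)^*\to M$ equals $\rho_{R\cup R_1}$ (hence also $\rho_{R\cup R_1\cup R_2}$), which one proves by using $R_1$ to rewrite any word over $A\sqcup B$ into a word over $A$ and then using that $\rho_R$ is exactly the kernel of $\varphi$ on $A^*$ (elementary $R$-transitions starting from a word in $A^*$ stay inside $A^*$, so no restriction issue arises). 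With that lemma and its symmetric counterpart $\ker\theta=\rho_{S\cup R_2}$, all your consequence checks for (T1)/(T2) go through, and the (T4) eliminations are legitimate since each $v_a$ lies in $B^*$. Finally, since $A$, $B$, $R$, $S$ are finite, the whole procedure uses only finitely many elementary transformations, which is what the statement requires; this is implicit in your construction but should be said.
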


\medskip 

Now, consider the alphabet $A=\{g,h,e_1,e_2,\ldots,e_n\}$ and the following set $R$ formed by the following monoid relations: 
\begin{description}
\item $(R_1)$ $g^n=1$, $h^2=1$ and $hg=g^{n-1}h$; 

\item $(R_2)$ $e_i^2=e_i$, for $1\leqslant i\leqslant n$;

\item $(R_3)$ $e_ie_j=e_je_i$, for $1\leqslant i<j\leqslant n$; 

\item $(R_4)$ $ge_1=e_ng$ and $ge_{i+1}=e_ig$, for $1\leqslant i\leqslant n-1$; 

\item $(R_5)$ $he_i=e_{n-i+1}h$, for $1\leqslant i\leqslant n$;

\item $(R_6^\text{o})$ $hge_2e_3\cdots e_n=e_2e_3\cdots e_n$, if $n$ is odd; 

\item $(R_6^\text{e})$ $hge_2\cdots e_{\frac{n}{2}}e_{\frac{n}{2}+2}\cdots e_n=e_2\cdots e_{\frac{n}{2}}e_{\frac{n}{2}+2}\cdots e_n$ 
and $he_1e_2\cdots e_n=e_1e_2\cdots e_n$, if $n$ is even.
\end{description}
Observe that $|R|=\frac{n^2+5n+9+(-1)^n}{2}$. 

\smallskip 

We aim to show that the monoid $\DPC_n$ is defined by the presentation $\langle A \mid R\rangle$. 

\smallskip 

Let $\phi:A\longrightarrow \DPC_n$ be the mapping defined by
$$
g\phi=g ,\quad h\phi=h ,\quad e_i\phi=e_i, \mbox{~for $1\leqslant i\leqslant n$}, 
$$
and let $\varphi:A^*\longrightarrow \DPC_n$ be the homomorphism of monoids that extends $\phi$ to $A^*$. 
Notice that we are using the same symbols for the letters of the alphabet $A$ and for the generating set of $\DPC_n$,
which simplifies notation and, within the context, will not cause ambiguity. 

\smallskip 

It is a routine matter to check the following lemma. 

\begin{lemma}\label{genrel}
The set of generators $\{g,h,e_1,e_2,\ldots,e_n\}$ of $\DPC_n$ satisfies (via $\varphi$) all the relations from $R$.
\end{lemma}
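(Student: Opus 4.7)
The proof is entirely a routine verification, so the plan is to go through the six families of relations in turn, reducing each to an elementary calculation with partial permutations. I can rely throughout on the explicit formulas for the action of $g^k$ and $hg^k$ given earlier in the paper and on the identity $\Pi_{i\in X}e_i=\id_{\Omega_n\setminus X}$ already recorded before the statement of the lemma.

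First, relations $(R_1)$ hold because $g$ and $h$ are, by construction, the standard generators of $\D_{2n}$ viewed as a subgroup of $\Sym_n$, and this presentation of $\D_{2n}$ was stated explicitly in Section~\ref{basics}. Relations $(R_2)$ and $(R_3)$ are immediate from $e_i=\id_{\Omega_n\setminus\{i\}}$: any identity partial transformation is idempotent, and for $i\neq j$, $e_ie_j=\id_{\Omega_n\setminus\{i,j\}}=e_je_i$, which is the formula quoted before Lemma~\ref{fundlemma} being proved.

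For $(R_4)$ I would observe that, since $g$ is a permutation, the domain of $g e_{i+1}$ is the preimage under $g$ of $\Omega_n\setminus\{i+1\}$; using $jg=j+1$ (mod $n$) this preimage is $\Omega_n\setminus\{i\}$, and $g e_{i+1}$ acts as $g$ on this set, which is precisely $e_i g$. The boundary case $ge_1=e_ng$ is handled identically, using that $g$ sends $n$ to $1$. An analogous argument, now with the formula $ih=n-i+1$, gives $(R_5)$: the domain of $he_i$ is $\Omega_n\setminus\{n-i+1\}$, and on this set $he_i$ coincides with $h$, which is $e_{n-i+1}h$.

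The only mildly interesting part is $(R_6^{\mathrm{o}})$ and $(R_6^{\mathrm{e}})$, and here the key is to compute the action of $hg$. From the formulas for $ihg^k$ with $k=1$ one has $1\cdot hg=1$ and $i\cdot hg=n+2-i$ for $2\leqslant i\leqslant n$, so $hg$ fixes $1$ and, when $n$ is even, also fixes $\tfrac{n}{2}+1$. Thus, if $n$ is odd, $e_2e_3\cdots e_n=\id_{\{1\}}$ and multiplying by $hg$ on the left leaves it unchanged, giving $(R_6^{\mathrm{o}})$. If $n$ is even, $e_2\cdots e_{\frac{n}{2}}e_{\frac{n}{2}+2}\cdots e_n=\id_{\{1,\frac{n}{2}+1\}}$ which is again pointwise fixed by $hg$, yielding the first relation of $(R_6^{\mathrm{e}})$; and the second relation of $(R_6^{\mathrm{e}})$ reduces to $h\cdot\emptyset=\emptyset$ since $e_1e_2\cdots e_n=\id_\emptyset$. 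The only obstacle worth pausing on is bookkeeping for $hg$ in the even case, to confirm that $\tfrac{n}{2}+1$ is fixed, but this is a one-line check, so no genuine difficulty arises.
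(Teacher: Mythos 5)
Your verification is correct and matches what the paper intends: the paper simply declares the lemma ``a routine matter to check,'' and your case-by-case computation is exactly that routine check carried out explicitly (in particular, the key facts that $hg$ fixes $1$, and also $\tfrac{n}{2}+1$ when $n$ is even, and that $e_1e_2\cdots e_n$ is the empty map, are all right under the paper's right-action conventions). No gaps; nothing further is needed.
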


Observe that this result assures us that, if $u,v\in A^*$ are such that the relation $u=v$ is a consequence of $R$, 
then $u\varphi=v\varphi$. 

\smallskip 

Next, in order to prove that any relation satisfied by the generating set of $\DPC_n$ is a consequence of $R$, 
we first present a series of three lemmas. 

In what follows, we denote the congruence $\rho_R$ of $A^*$ simply by $\rho$. 

\begin{lemma}\label{pre0}
If $n$ is even then the relation 
$$
hg^{2j-1}e_1\cdots e_{j-1}e_{j+1}\cdots e_{j+\frac{n}{2}-1}e_{j+\frac{n}{2}+1}\cdots e_n =
e_1\cdots e_{j-1}e_{j+1}\cdots e_{j+\frac{n}{2}-1}e_{j+\frac{n}{2}+1}\cdots e_n
$$  
is a consequence of $R$, for $1\leqslant j\leqslant \frac{n}{2}$. 
\end{lemma}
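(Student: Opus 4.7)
The natural plan is to proceed by induction on $j$. Writing
$$P_j \equiv e_1\cdots e_{j-1}e_{j+1}\cdots e_{j+\frac{n}{2}-1}e_{j+\frac{n}{2}+1}\cdots e_n,$$
the goal is to show that $hg^{2j-1}P_j = P_j$ is a consequence of $R$ for every $1 \leqslant j \leqslant \frac{n}{2}$. The base case $j=1$ is precisely the first relation of $(R_6^\text{e})$, since for $j=1$ the empty prefix $e_1\cdots e_{j-1}$ disappears.

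For the inductive step I would combine two auxiliary consequences of $R$. The first, deduced from $(R_1)$, is $g^{n-1}h = hg$ (a direct rewriting of $hg = g^{n-1}h$ using $g^n = 1$), which iterates to $g^{n-1}hg^{k} = hg^{k+1}$ for every $k \geqslant 0$. The second, deduced from $(R_3)$ and $(R_4)$, is the shift identity $P_j g = g P_{j+1}$, valid for $1 \leqslant j \leqslant \frac{n}{2}-1$: using the relations $e_i g = g e_{i+1}$ (for $1 \leqslant i \leqslant n-1$) and $e_n g = g e_1$ from $(R_4)$, the rightmost $g$ can be pushed past every factor of $P_j$, each $e_i$ becoming $e_{i+1}$ (with $e_{n+1}$ understood as $e_1$); since $1 \leqslant j \leqslant \frac{n}{2}-1$ implies $j+\frac{n}{2}+1 \leqslant n$, the resulting index set is $\{1,\ldots,n\}\setminus\{j+1, j+\frac{n}{2}+1\}$, and reordering its factors by $(R_3)$ yields exactly $gP_{j+1}$.

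Granted these two tools, the inductive step is straightforward. Assuming that $hg^{2j-1}P_j = P_j$ is a consequence of $R$, multiplying on the left by $g^{n-1}$ and on the right by $g$ gives the consequence $g^{n-1}h g^{2j-1} P_j g = g^{n-1} P_j g$, whose left-hand side rewrites as
$$(g^{n-1}h)\, g^{2j-1}\, (P_j g) \,=\, (hg)\, g^{2j-1}\, (g P_{j+1}) \,=\, hg^{2j+1} P_{j+1},$$
and whose right-hand side rewrites as $g^{n-1}(g P_{j+1}) = P_{j+1}$ using $g^n=1$. Hence $hg^{2j+1} P_{j+1} = P_{j+1}$ is a consequence of $R$, completing the induction. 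The point deserving the most care is the shift identity itself, namely the careful bookkeeping showing that the shifted missing indices are exactly $j+1$ and $j+\frac{n}{2}+1$, with no wraparound for the relevant range of $j$.
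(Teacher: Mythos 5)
Your proof is correct and takes essentially the same route as the paper's: induction on $j$ with the first relation of $(R_6^\text{e})$ as base case, and an inductive step driven by the shift relations $(R_4)$ together with $(R_3)$ and the relation $hg=g^{n-1}h$ from $(R_1)$. The only difference is organizational: you isolate the identity $P_jg=gP_{j+1}$ and multiply the induction hypothesis on the left by $g^{n-1}$ and on the right by $g$, whereas the paper rewrites $hg^{2(j+1)-1}P_{j+1}$ step by step down to the induction hypothesis and back.
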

\begin{proof} 
We proceed by induction on $j$.

Let $j=1$. 
Then $hge_2\cdots e_{\frac{n}{2}}e_{\frac{n}{2}+2}\cdots e_n=e_2\cdots e_{\frac{n}{2}}e_{\frac{n}{2}+2}\cdots e_n$ 
is a relation of $R$. 

Next, suppose that $hg^{2j-1}e_1\cdots e_{j-1}e_{j+1}\cdots e_{j+\frac{n}{2}-1}e_{j+\frac{n}{2}+1}\cdots e_n =
e_1\cdots e_{j-1}e_{j+1}\cdots e_{j+\frac{n}{2}-1}e_{j+\frac{n}{2}+1}\cdots e_n$,  
for some $1\leqslant j\leqslant \frac{n}{2}-1$. 
Then
$$
\begin{array}{cll}
& hg^{2(j+1)-1}e_1\cdots e_je_{j+2}\cdots e_{j+\frac{n}{2}}e_{j+\frac{n}{2}+2}\cdots e_n & \\
  \equiv & hg^{2j+1}e_1\cdots e_je_{j+2}\cdots e_{j+\frac{n}{2}}e_{j+\frac{n}{2}+2}\cdots e_n & \\
  \rho & hg^{2j}e_nge_2\cdots e_je_{j+2}\cdots e_{j+\frac{n}{2}}e_{j+\frac{n}{2}+2}\cdots e_n & \mbox{(by $R_4$)}\\ 
  \rho & hgg^{2j-1}e_n e_1\cdots e_{j-1}e_{j+1}\cdots e_{j+\frac{n}{2}-1}e_{j+\frac{n}{2}+1}\cdots e_{n-1}g & \mbox{(by $R_4$)}\\ 
  \rho & g^{n-1}hg^{2j-1}e_1\cdots e_{j-1}e_{j+1}\cdots e_{j+\frac{n}{2}-1}e_{j+\frac{n}{2}+1}\cdots e_ng & \mbox{(by $R_1$ and $R_3$)}\\ 
  \rho & g^{n-1}e_1\cdots e_{j-1}e_{j+1}\cdots e_{j+\frac{n}{2}-1}e_{j+\frac{n}{2}+1}\cdots e_ng& \mbox{(by the induction hyphotesis)}\\ 
  \rho & g^{n-1}e_1\cdots e_{j-1}e_{j+1}\cdots e_{j+\frac{n}{2}-1}e_{j+\frac{n}{2}+1}\cdots e_{n-1}ge_1 & \mbox{(by $R_4$)}\\ 
  \rho &g^{n-1}ge_2\cdots e_je_{j+2}\cdots e_{j+\frac{n}{2}}e_{j+\frac{n}{2}+2}\cdots e_ne_1 & \mbox{(by $R_4$)}\\ 
  \rho &e_1\cdots e_je_{j+2}\cdots e_{j+\frac{n}{2}}e_{j+\frac{n}{2}+2}\cdots e_n & \mbox{(by $R_1$ and $R_3$)}, 
\end{array}
$$
as required. 
\end{proof} 

\begin{lemma}\label{pre1}
The relation $hg^{2i-1}e_1\cdots e_{i-1}e_{i+1}\cdots e_n=e_1\cdots e_{i-1}e_{i+1}\cdots e_n$ 
is a consequence of $R$, for $1\leqslant i\leqslant n$. 
\end{lemma}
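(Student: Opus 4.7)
The plan is to prove Lemma \ref{pre1} by induction on $i$, mirroring the template of the proof of Lemma \ref{pre0}. Throughout, write $f_k\equiv e_1\cdots e_{k-1}e_{k+1}\cdots e_n$ for the canonical idempotent word obtained by omitting $e_k$, so the claim becomes $hg^{2i-1}f_i\ro f_i$.

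For the base case $i=1$, the desired identity is $hge_2\cdots e_n=e_2\cdots e_n$. When $n$ is odd this is precisely relation $R_6^\text{o}$. When $n$ is even it is not a defining relation, but Lemma \ref{pre0} specialized to $j=1$ reads $hge_2\cdots e_{n/2}e_{n/2+2}\cdots e_n=e_2\cdots e_{n/2}e_{n/2+2}\cdots e_n$, and multiplying both sides on the right by $e_{n/2+1}$ and sliding it into its natural position between $e_{n/2}$ and $e_{n/2+2}$ on each side via $R_3$ yields exactly what is needed.

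For the inductive step, assume $hg^{2i-1}f_i\ro f_i$ for some $1\leqslant i\leqslant n-1$. The workhorse is the \emph{shift relation} $g^{n-1}f_i g\ro f_{i+1}$, obtained by pushing $g^{n-1}$ rightward through each letter of $f_i$ via repeated use of $R_4$ (so each $g$ cyclically increases an index by $1$, thanks to $ge_1=e_ng$ at the wrap-around), then reordering into canonical form using $R_3$, and finally collapsing $g^n$ to $1$ via $R_1$. Granting this, the step reduces to the chain
\[
hg^{2i+1}f_{i+1}\ro hg^{2i+1}\bigl(g^{n-1}f_i g\bigr)\ro hg^{2i}f_i g\ro g^{n-1}\bigl(hg^{2i-1}f_i\bigr)g\ro g^{n-1}f_i g\ro f_{i+1},
\]
whose successive steps use the shift relation, $g^n=1$, the identity $hg=g^{n-1}h$ from $R_1$, the induction hypothesis, and the shift relation once more.

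The main obstacle is the careful verification of the shift relation $g^{n-1}f_i g\ro f_{i+1}$: one must precisely track the cyclic shift of indices produced by $R_4$ (including the wrap-around $e_1\leftrightarrow e_n$) and confirm that the output can be brought into canonical form using only the commutation relations $R_3$. This is the same kind of bookkeeping already carried out inside the proof of Lemma \ref{pre0}, so once it is in place the remainder of the argument is a short formal manipulation.
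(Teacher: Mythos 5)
Your proposal is correct and follows essentially the same route as the paper: induction on $i$, with the base case settled by $R_6^\text{o}$ (odd $n$) or by appending $e_{\frac{n}{2}+1}$ to the relation $R_6^\text{e}$ and commuting via $R_3$ (even $n$), and the inductive step carried out by conjugating with $g$ using $R_1$, $R_3$, $R_4$ and the induction hypothesis. Your ``shift relation'' $g^{n-1}f_ig\ro f_{i+1}$ is exactly the push-through manipulation the paper performs inline (twice, once in each direction), so apart from the slightly loose wording about which way a single $g$ shifts indices under $R_4$, the argument matches the paper's proof.
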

\begin{proof} 
We proceed by induction on $i$.

Let $i=1$. If $n$ is odd then $hge_2e_3\cdots e_n=e_2e_3\cdots e_n$ is a relation of $R$. So, suppose that $n$ is even. 
Then $hge_2\cdots e_{\frac{n}{2}}e_{\frac{n}{2}+2}\cdots e_n=e_2\cdots e_{\frac{n}{2}}e_{\frac{n}{2}+2}\cdots e_n$ is a relation of $R$,
whence 
$$
hge_2\cdots e_{\frac{n}{2}}e_{\frac{n}{2}+2}\cdots e_ne_{\frac{n}{2}+1}\ro e_2\cdots e_{\frac{n}{2}}e_{\frac{n}{2}+2}\cdots e_ne_{\frac{n}{2}+1}
$$ 
and so $hge_2e_3\cdots e_n=e_2e_3\cdots e_n$, by $R_3$. 

Now, suppose that $hg^{2i-1}e_1\cdots e_{i-1}e_{i+1}\cdots e_n\ro e_1\cdots e_{i-1}e_{i+1}\cdots e_n$,  
for some $1\leqslant i\leqslant n-1$. 
Then (with steps similar to the previous proof), we have 
$$
\begin{array}{rcll}
hg^{2(i+1)-1}e_1\cdots e_{i}e_{i+2}\cdots e_n & \equiv & hg^{2i+1}e_1\cdots e_{i}e_{i+2}\cdots e_n & \\
 & \rho & hg^{2i}e_nge_2\cdots e_{i}e_{i+2}\cdots e_n & \mbox{(by $R_4$)}\\ 
 & \rho & hgg^{2i-1}e_ne_1\cdots e_{i-1}e_{i+1}\cdots e_{n-1}g & \mbox{(by $R_4$)}\\ 
 & \rho & g^{n-1}hg^{2i-1}e_1\cdots e_{i-1}e_{i+1}\cdots e_ng & \mbox{(by $R_1$ and $R_3$)}\\ 
 & \rho & g^{n-1}e_1\cdots e_{i-1}e_{i+1}\cdots e_n  g & \mbox{(by the induction hyphotesis)}\\ 
 & \rho & g^{n-1}e_1\cdots e_{i-1}e_{i+1}\cdots e_{n-1}ge_1 & \mbox{(by $R_4$)}\\ 
 & \rho &g^{n-1}ge_2\cdots e_{i}e_{i+2}\cdots e_ne_1 & \mbox{(by $R_4$)}\\ 
 & \rho &e_1\cdots e_{i}e_{i+2}\cdots e_n & \mbox{(by $R_1$ and $R_3$)}, 
\end{array}
$$
as required. 
\end{proof} 

\begin{lemma}\label{pre2}
The relation $h^\ell g^me_1e_2\cdots e_n=e_1e_2\cdots e_n$  is a consequence of $R$,  for $\ell,m\geqslant0$. 
\end{lemma}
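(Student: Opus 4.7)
The plan is to set $w \equiv e_1 e_2 \cdots e_n$ and reduce the statement to establishing two ``building blocks'' as consequences of $R$: $g w \rho w$ and $h w \rho w$. Once these are in hand, an easy induction on $m$ gives $g^m w \rho w$ (from $g^{m+1}w \equiv g \cdot g^m w \rho g w \rho w$), and then a further induction on $\ell$ gives $h^\ell g^m w \rho h^\ell w \rho w$ (using $h \cdot h^\ell g^m w \rho h w \rho w$).

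First I would show $h g w \rho w$. Applying Lemma \ref{pre1} with $i=1$ yields $hg\, e_2 e_3 \cdots e_n \rho e_2 e_3 \cdots e_n$; right-multiplying both sides by $e_1$ and using the commutativity relations $R_3$ to shift $e_1$ to the front of each product gives exactly $hgw \rho w$.

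Next, I would establish $h w \rho w$, splitting on the parity of $n$. If $n$ is even, this identity is one of the relations of $R_6^\text{e}$. If $n$ is odd, the key observation is that the exponent $2i-1$ appearing in Lemma \ref{pre1} can be made equal to $n$ by choosing $i=(n+1)/2$, which lies in $\{1,\ldots,n\}$ precisely because $n$ is odd. Since $g^n \rho 1$ by $R_1$, Lemma \ref{pre1} then reads $h\, e_1 \cdots e_{i-1} e_{i+1} \cdots e_n \rho e_1 \cdots e_{i-1} e_{i+1} \cdots e_n$; right-multiplying by $e_i$ and reordering via $R_3$ produces $hw \rho w$. Combining the two previous identities gives $hgw \rho hw$, and left-multiplying by $h$ followed by an application of $h^2 = 1$ from $R_1$ yields $gw \rho w$.

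The only subtle point, and in my view the main obstacle, is the odd case of $hw \rho w$: the relation $R_6^\text{o}$ alone provides only one identity of this flavour and carries a residual $g$-power, so it does not directly collapse $h$ on $w$; the trick is to extract $hw \rho w$ indirectly from Lemma \ref{pre1}, using that $n$ is odd to select an index $i$ for which $2i-1$ is a multiple of $n$. All remaining manipulations are routine applications of $R_1$, $R_2$, and $R_3$.
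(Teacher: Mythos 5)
Your proof is correct, and the reduction to the two identities $gw\ro w$ and $hw\ro w$ (with $w\equiv e_1e_2\cdots e_n$) followed by induction on $m$ and $\ell$ mirrors the paper's overall plan; where you genuinely diverge is in how the two building blocks are obtained. For the delicate odd case of $hw\ro w$, the paper starts from $R_6^{\text{o}}$, deduces $gw\ro hw$, and then raises both sides to the $n$-th power, pushing $g$ and $h$ through the idempotents via $R_3$, $R_4$, $R_5$ and using $g^n=1$ together with $h^n\ro h$ (as $n$ is odd) to collapse the powers; your observation that Lemma~\ref{pre1} at the index $i=\frac{n+1}{2}$ has exponent $2i-1=n$, which vanishes by $R_1$, gives $hw\ro w$ in one stroke after right-multiplying by $e_i$ and commuting with $R_3$ --- a shorter route that trades the paper's power computation for a well-chosen instance of an already proved lemma. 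Likewise, the paper derives $gw\ro w$ by a direct chain of rewritings (Lemma~\ref{pre1} with $i=1$, then $R_4$, $R_1$, $R_5$ and the first part), whereas you get it formally by combining $hgw\ro w$ with $hw\ro w$ and cancelling $h$ via $h^2=1$; both are valid, and your version is the more economical of the two, at the cost of relying on Lemma~\ref{pre1} for both building blocks rather than only for the second.
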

\begin{proof} 
First, we prove that the relation $he_1e_2\cdots e_n=e_1e_2\cdots e_n$ is a consequence of $R$. 
Since this relation belongs to $R$ for an $even$ $n$, it remains to show that 
$he_1e_2\cdots e_n\ro e_1e_2\cdots e_n$ for $n$ odd. 

Suppose that $n$ is odd. Hence, from $R_6^\text{o}$, 
we have $hge_2e_3\cdots e_ne_1\ro e_2e_3\cdots e_ne_1$, 
so $hge_1e_2\cdots e_n\ro e_1e_2\cdots e_n$ (by $R_3$), whence  
$ge_1e_2\cdots e_n\ro he_1e_2\cdots e_n$ (by $R_1$) and then 
$(ge_1e_2\cdots e_n)^n\ro (he_1e_2\cdots e_n)^n$. 
Now, by applying relations $R_4$ and $R_3$, we have 
$$
ge_1e_2\cdots e_n\ro e_nge_2\cdots e_n\ro e_ne_1\cdots e_{n-1}g \ro e_1e_2\cdots e_ng,
$$
whence $(ge_1e_2\cdots e_n)^n\ro g^n(e_1e_2\cdots e_n)^n\ro e_1e_2\cdots e_n$, by relations $R_1$, $R_3$ and $R_2$. 
On the other hand, 
by applying relations $R_5$ and $R_3$, we get 
$$
he_1e_2\cdots e_n\ro e_ne_{n-1}\cdots e_1h\ro e_1e_2\cdots e_nh,
$$
whence $(he_1e_2\cdots e_n)^n\ro h^n(e_1e_2\cdots e_n)^n\ro h e_1e_2\cdots e_n$, 
by relations $R_1$, $R_3$ and $R_2$, since $n$ is odd. 
Therefore $he_1e_2\cdots e_n\ro e_1e_2\cdots e_n$. 

\smallskip 

Secondly, we prove that the relation $ge_1e_2\cdots e_n=e_1e_2\cdots e_n$ is a consequence of $R$. In fact, we have 
$$
\begin{array}{rcll}
ge_1e_2\cdots e_n & \rho & ge_1hge_2\cdots e_n & \mbox{(by Lemma \ref{pre1})}\\
 & \rho & e_nghge_2\cdots e_n & \mbox{(by $R_4$)}\\
 & \rho & e_ngg^{n-1}he_2\cdots e_n & \mbox{(by $R_1$)}\\
 & \rho & e_nhe_2\cdots e_n & \mbox{(by $R_1$)}\\
 & \rho & he_1e_2\cdots e_n & \mbox{(by $R_5$)}\\ 
 & \rho & e_1e_2\cdots e_n & \mbox{(by the first part).} 
\end{array}
$$

\smallskip 

Now, clearly, for $\ell,m\geqslant0$, $h^\ell g^me_1e_2\cdots e_n\ro e_1e_2\cdots e_n$ follows immediately from 
$ge_1e_2\cdots e_n\ro e_1e_2\cdots e_n$ and $he_1e_2\cdots e_n\rho e_1e_2\cdots e_n$, 
which concludes the proof of the lemma. 
\end{proof} 

We are now in a position to prove the following result.

\begin{theorem}\label{firstpres} 
The monoid $\DPC_n$ is defined by the presentation $\langle A \mid R\rangle$ on $n+2$ generators. 
\end{theorem}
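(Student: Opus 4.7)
The plan is to apply Proposition \ref{provingpresentation}. Condition (1) is already provided by Lemma \ref{genrel}, so I would focus on condition (2): every relation satisfied by the generators of $\DPC_n$ must be a consequence of $R$. I would establish this via a normal-form reduction combined with a counting argument against the cardinality formula from Section \ref{basics}.

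First, using only $R_1$--$R_5$, every word in $A^*$ reduces to one of the form
$$
h^\epsilon g^j e_{k_1}e_{k_2}\cdots e_{k_r},
$$
with $\epsilon\in\{0,1\}$, $0\leqslant j\leqslant n-1$ and $1\leqslant k_1<k_2<\cdots<k_r\leqslant n$ (possibly $r=0$). Indeed, $R_1$ is the standard dihedral presentation of $\D_{2n}$ on $\{g,h\}$, so the ``group prefix'' collapses to $h^\epsilon g^j$; relations $R_4$ and $R_5$ let one push any $g$ or $h$ past the $e_i$'s; and $R_2$, $R_3$ reorder and idempotent-reduce the resulting product of $e_i$'s. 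This leaves at most $2n\cdot 2^n = n\cdot 2^{n+1}$ candidate normal forms.

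Next, I would use Lemmas \ref{pre0}--\ref{pre2} to prune these candidates in accordance with Lemma \ref{fundlemma}. By Lemma \ref{pre2}, the $r=n$ case collapses entirely to $e_1\cdots e_n$, eliminating $2n-1$ candidates. For $r=n-1$, say the $e$-block omits the index $b$, so the word represents a rank-$1$ partial permutation $\binom{a}{b}$, and if $\epsilon=1$ then necessarily $j\equiv a+b-1\pmod{n}$. Starting from Lemma \ref{pre1} at index $a$, I would multiply both sides on the right by $g^{b-a}$ and then iteratively push $g^{b-a}$ leftward through the $e$-block using $R_4$: each $e_i$ becomes $e_{i+(b-a)}$ (mod $n$), so the missing slot migrates from $a$ to $b$. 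Combined with $R_1$ (to rewrite $hg^{2a-1}g^{b-a}$ as $hg^{a+b-1}$), this yields
$$
hg^{a+b-1}e_1\cdots e_{b-1}e_{b+1}\cdots e_n \ro g^{b-a}e_1\cdots e_{b-1}e_{b+1}\cdots e_n,
$$
eliminating one of the two candidates for each of the $n^2$ rank-$1$ elements. For even $n$ and $r=n-2$ with the two missing indices at distance $\frac{n}{2}$, an entirely analogous derivation based on Lemma \ref{pre0} removes the additional $\frac{n^2}{2}$ duplicates arising from Lemma \ref{fundlemma}(1).

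A direct count then shows the number of surviving normal forms is exactly $n2^{n+1}-n^2-2n+1$ for odd $n$ and $n2^{n+1}-\frac{3n^2}{2}-2n+1$ for even $n$, matching the cardinality of $\DPC_n$ obtained in Section \ref{basics}. Since $\varphi$ is surjective, the number of $\rho$-classes of $A^*$ is at least $|\DPC_n|$, and by the preceding it is also at most $|\DPC_n|$, so $\varphi$ induces an isomorphism $A^*/\rho\to\DPC_n$ and condition (2) follows. The main obstacle will be the index bookkeeping in the rank-$1$ (and diametric rank-$2$) reductions: tracking simultaneously how the exponent of $g$ evolves under $g^k h\ro hg^{n-k}$ and how the omitted slot inside the $e$-block shifts under each application of $R_4$ is precisely what lets Lemma \ref{pre1} bridge the orientation-reversing normal form to the orientation-preserving one; once this bookkeeping is correct, the counting argument is immediate.
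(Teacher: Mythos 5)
Your proposal is correct, and its first half coincides with the paper's own argument: reduction via $R_1$--$R_5$ to words of the form $h^{\epsilon}g^{j}e_{k_1}\cdots e_{k_r}$, and then the use of Lemmas \ref{pre0}--\ref{pre2} to glue together the pairs of such words representing the empty map, the rank-one maps and, for even $n$, the rank-two maps whose domain points are at distance $\frac{n}{2}$; your device of multiplying Lemma \ref{pre1} on the right by $g^{b-a}$ and letting $R_4$ shift the omitted slot from $a$ to $b$ is a correct variant of the paper's computation, which instead multiplies on the left by $g^{m}$ and exploits the congruence $m+m'\equiv 2j-1 \pmod n$. Where you genuinely diverge is the closing step: the paper verifies condition (2) of Proposition \ref{provingpresentation} directly, invoking the uniqueness clause of Lemma \ref{fundlemma} to conclude that for all remaining cases (rank at least $3$, or rank $2$ with distance different from $\frac{n}{2}$) the dihedral extension, and hence the group prefix $h^{\ell}g^{m}$, is the same for $u$ and $v$, so their normal forms literally coincide; you instead count the surviving normal forms and match the total, $n2^{n+1}-n^2-2n+1$ for odd $n$ and $n2^{n+1}-\frac{3}{2}n^2-2n+1$ for even $n$, against the cardinality theorem of Section \ref{basics}, and conclude from surjectivity that $A^*/\rho\to\DPC_n$ is bijective. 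Both routes are sound. The paper's argument keeps the presentation theorem independent of the enumeration (no exact count of normal forms is needed), whereas yours promotes the cardinality formula from a by-product to an essential ingredient but in exchange never needs the uniqueness part of Lemma \ref{fundlemma} explicitly. One small point to make explicit when writing up the even-$n$ rank-two case: since Lemma \ref{pre0} is stated only for $1\leqslant j\leqslant\frac{n}{2}$, the element pairing the domain pair $\{d,d+\frac{n}{2}\}$ the ``other way'' is reached by shifting the relation anchored at $d$ by $g^{\,j-d+\frac{n}{2}}$ rather than $g^{\,j-d}$ (the shifted $e$-block is the same because the omitted pair is antipodal); with that observation your bookkeeping closes and the count of eliminated duplicates is indeed $\frac{n^2}{2}$.
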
 
\begin{proof} In view of Proposition \ref{provingpresentation} and Lemma \ref{genrel}, 
it remains to prove that any relation satisfied by the generating set $\{g,h,e_1,e_2,\ldots,e_n\}$ of $\DPC_n$ is a consequence of $R$. 

Let $u,v\in A^*$ be such that $u\varphi=v\varphi$. We aim to show that $u\ro v$. 

Take $\alpha=u\varphi$. 

It is clear that relations $R_1$ to $R_5$ allow us to deduce that $u\ro h^\ell g^m e_{i_1}\cdots e_{i_k}$, 
for some $\ell\in\{0,1\}$, $m\in\{0,1,\ldots,n-1\}$, $1\leqslant i_1 < \cdots < i_k\leqslant n$ and $0\leqslant k\leqslant n$.  
Similarly, we have $v\ro h^{\ell'} g^{m'} e_{i'_1}\cdots e_{i'_k}$, 
for some $\ell'\in\{0,1\}$, $m'\in\{0,1,\ldots,n-1\}$, $1\leqslant i'_1 < \cdots < i'_{k'}\leqslant n$ and $0\leqslant k'\leqslant n$.  

Since $\alpha=h^\ell g^m e_{i_1}\cdots e_{i_k}$, it follows that $\im(\alpha)=\Omega_n\setminus\{i_1,\ldots,i_k\}$ and 
$\alpha=h^\ell g^m|_{\dom(\alpha)}$. Similarly, as also $\alpha=v\varphi$, 
from $\alpha= h^{\ell'} g^{m'} e_{i'_1}\cdots e_{i'_k}$, 
we get $\im(\alpha)=\Omega_n\setminus\{i'_1,\ldots,i'_{k'}\}$ and $\alpha= h^{\ell'} g^{m'}|_{\dom(\alpha)}$. 
Hence $k'=k$ and $\{i'_1,\ldots,i'_k\}=\{i_1,\ldots,i_k\}$. 
Furthermore, 
if either $k=n-2$ and $\d(\min \dom(\alpha),\max \dom(\alpha)) \neq \frac{n}{2}$ or $k\leqslant n-3$, 
by Lemma \ref{fundlemma}, we obtain $\ell'=\ell$ and $m'=m$ and so $u\ro h^\ell g^m e_{i_1}\cdots e_{i_k} \ro v$. 

\smallskip 

If $h^{\ell'}g^{m'} = h^\ell g^m$ (even as elements of $\D_{2n}$) then $\ell'=\ell$ and $m'=m$ and so   
we get again $u\ro   h^\ell g^m e_{i_1}\cdots e_{i_k} \ro v$. 

Therefore, let us suppose that $h^{\ell'}g^{m'}\neq h^\ell g^m$.  
Hence, by Lemma \ref{fundlemma}, we may conclude that $\alpha=\emptyset$ or $\ell'=\ell-1$ or $\ell'=\ell+1$. 

If $\alpha=\emptyset$, i.e. $k=n$, then 
$u\ro h^\ell g^m e_1e_2\cdots e_n \ro e_1e_2\cdots e_n \ro h^{\ell'}g^{m'} e_1e_2\cdots e_n \ro v$, 
by Lemma \ref{pre2}. 

Thus, we may suppose that $\alpha\neq\emptyset$ and, without loss of generality, also that $\ell'=\ell+1$, i.e. $\ell=0$ and $\ell'=1$. 

\smallskip 

Let $k=n-2$ and admit that $\d(\min \dom(\alpha),\max \dom(\alpha)) = \frac{n}{2}$ (in which case $n$ is even). 

Let 
$\alpha=\begin{pmatrix} 
i_1&i_2\\
j_1&j_2
\end{pmatrix}
$, 
with $1\leqslant i_1<i_2\leqslant n$. Then $i_2-i_1=\frac{n}{2}=\d(i_1,i_2)=\d(j_1,j_2)=|j_2-j_1|$ 
and so $j_2\in\{j_1 -\frac{n}{2}, j_1 +\frac{n}{2}\}$. 
Let $j=\min\{j_1, j_2\}$ (notice that  $1\leqslant j\leqslant\frac{n}{2}$) and $i=j\alpha^{-1}$. 
Hence $\im(\alpha)=\{j,j+\frac{n}{2}\}$ and 
$\alpha=g^{n+j-i}|_{\dom(\alpha)}=hg^{i+j-1-n}|_{\dom(\alpha)}$ 
(cf. proof of Lemma \ref{fundlemma}). 
So, we have 
$$
u\ro g^m e_1\cdots e_{j-1}e_{j+1}\cdots e_{j+\frac{n}{2}-1}e_{j+\frac{n}{2}+1}\cdots e_n
\quad\text{and}\quad 
v\ro hg^{m'} e_1\cdots e_{j-1}e_{j+1}\cdots e_{j+\frac{n}{2}-1}e_{j+\frac{n}{2}+1}\cdots e_n
$$
and, by Lemma \ref{fundlemma}, $m=rn+j-i$, for some $r\in\{0,1\}$, and $m'=i+j-1-r'n$, for some $r'\in\{0,1\}$. 
Thus, we get 

$$
\begin{array}{rcll}
u & \rho & g^me_1\cdots e_{j-1}e_{j+1}\cdots e_{j+\frac{n}{2}-1}e_{j+\frac{n}{2}+1}\cdots e_n & \\
 & \rho & g^m hg^{2j-1}e_1\cdots e_{j-1}e_{j+1}\cdots e_{j+\frac{n}{2}-1}e_{j+\frac{n}{2}+1}\cdots e_n & \mbox{(by Lemma \ref{pre0})} \\ 
 & \rho & g^m hg^{2j-1+(r-r')n}e_1\cdots e_{j-1}e_{j+1}\cdots e_{j+\frac{n}{2}-1}e_{j+\frac{n}{2}+1}\cdots e_n & \mbox{(by $R_1$)} \\ 
 & \rho & hg^{n-m} g^{m+m'}e_1\cdots e_{j-1}e_{j+1}\cdots e_{j+\frac{n}{2}-1}e_{j+\frac{n}{2}+1}\cdots e_n & \mbox{(by $R_1$)} \\ 
 & \rho & hg^{m'}e_1\cdots e_{j-1}e_{j+1}\cdots e_{j+\frac{n}{2}-1}e_{j+\frac{n}{2}+1}\cdots e_n & \mbox{(by $R_1$)} \\ 
 & \rho & v. & 
\end{array}
$$

\smallskip 

Finally, consider that $k=n-1$. Let $i\in\Omega_n$ be such that $\Omega_n\setminus\{i_1,\ldots,i_{n-1}\}=\{i\}$. 
Then $\im(\alpha)=\{i\}$ and $\{i_1,\ldots,i_{n-1}\}=\{1,\ldots,i-1,i+1,\ldots,n\}$. 

Take $a=i\alpha^{-1}$. Then $ag^m=i=ahg^{m'}$. 
Since $ag^m=a+m-rn$, for some $r\in\{0,1\}$, and $ahg^{m'}=(n-a+1)g^{m'}=r'n-a+1+m'$, for some $r'\in\{0,1\}$, 
in a similar way to what we proved before, 
we have 
$$
\begin{array}{rcll}
u & \rho & g^me_1\cdots e_{i-1}e_{i+1}\cdots e_n & \\
 & \rho & g^m hg^{2i-1}e_1\cdots e_{i-1}e_{i+1}\cdots e_n & \mbox{(by Lemma \ref{pre1})} \\ 
 & \rho & g^m hg^{2i-1+(r-r')n}e_1\cdots e_{i-1}e_{i+1}\cdots e_n & \mbox{(by $R_1$)} \\ 
 & \rho & hg^{n-m} g^{m+m'}e_1\cdots e_{i-1}e_{i+1}\cdots e_n & \mbox{(by $R_1$)} \\ 
 & \rho & hg^{m'}e_1\cdots e_{i-1}e_{i+1}\cdots e_n & \mbox{(by $R_1$)} \\ 
 & \rho & v , & 
\end{array}
$$
as required.  
\end{proof}

Notice that, taking into account the relation $h^2=1$ of $R_1$, we could have taken only \textit{half} of the relations $R_5$, 
namely the relations $he_i=e_{n-i+1}h$ with $1\leqslant i\leqslant \lceil \frac{n}{2}\rceil$. 

\medskip

Our next and final goal is to deduce from the previous presentation for $\DPC_n$ a new one on $3$ generators, by using Tietze transformations. 

\smallskip 

Recall that, towards the end of Section \ref{basics}, we observed that $e_i=hg^{i-1}e_nhg^{i-1}$ for all $i\in\{1,2,\ldots,n\}$. 

\smallskip 

We will proceed as follows: first, by applying T1, we add the relations $e_i=hg^{i-1}e_nhg^{i-1}$, for $1\leqslant i\leqslant n$; 
secondly, we apply T4 to each of the relations $e_i=hg^{i-1}e_nhg^{i-1}$ with $i\in\{1,2,\ldots,n-1\}$ and, 
in some cases, by convenience, we also replace  $e_n$ by $hg^{n-1}e_nhg^{n-1}$; 
finally, by using the relations $R_1$, we simplify the new relations obtained, eliminating the trivial ones or those that are deduced from others. 
In what follows, we perform this procedure for each of the sets of relations $R_1$ to $R_6^\text{o}/R_6^\text{e}$. 
\begin{description}
\item $(R_1)$ 
There is nothing to do for these relations. 

\item $(R_2)$ 
For $1\leqslant i\leqslant n-1$, from $e_i^2=e_i$, we have 
$
hg^{i-1}e_nhg^{i-1} hg^{i-1}e_nhg^{i-1} = hg^{i-1}e_nhg^{i-1},
$ 
which is equivalent to $e_n^2=e_n$. 

\item $(R_3)$ 
For $1\leqslant i<j\leqslant n$, from $e_ie_j=e_je_i$, we get 
$
hg^{i-1}e_nhg^{i-1} hg^{j-1}e_nhg^{j-1} = hg^{j-1}e_nhg^{j-1} hg^{i-1}e_nhg^{i-1} 
$
and it is a routine matter to check that this relation is equivalent to $e_ng^{j-i}e_ng^{n-j+i} = g^{j-i}e_ng^{n-j+i}e_n$. 

\item $(R_4)$  
From $ge_1=e_ng$, we obtain 
$
ghe_nh=e_ng,
$ 
which is equivalent to $hg^{n-1}e_nhg^{n-1}=e_n$ (and, obviously, also to $ghe_ngh=e_n$). 
On the other hand, for $1\leqslant i\leqslant n-1$, 
from $ge_{i+1}=e_ig$ we get 
$
g hg^ie_nhg^i = hg^{i-1}e_nhg^{i-1} g 
$
and this relation is equivalent to $e_n=e_n$. 

\item $(R_5)$ 
For $1\leqslant i\leqslant n$, from $he_i=e_{n-i+1}h$, 
we have 
$
h hg^{i-1}e_nhg^{i-1} = hg^{n-i}e_nhg^{n-i} h,
$ 
which is a relation equivalent to $hg^{n-1}e_nhg^{n-1}=e_n$. 

\item $(R_6^\text{o})$  
From $hge_2e_3\cdots e_n=e_2e_3\cdots e_n$ (with $n$ odd), we get 
\begin{align*}
hg (hge_nhg) (hg^2e_nhg^2)\cdots  (hg^{n-1}e_nhg^{n-1}) = 
(hge_nhg) (hg^2e_nhg^2)\cdots  (hg^{n-1}e_nhg^{n-1}). 
\end{align*} 
It is easy to check that this relation is equivalent to $hg(e_ng)^{n-2}e_n=(e_ng)^{n-2}e_n$. 

\item $(R_6^\text{e})$ 
Now, for an even $n$, from $hge_2\cdots e_{\frac{n}{2}}e_{\frac{n}{2}+2}\cdots e_n=e_2\cdots e_{\frac{n}{2}}e_{\frac{n}{2}+2}\cdots e_n$, we obtain 
\begin{align*}
hg (hge_nhg) \cdots  (hg^{\frac{n}{2}-1}e_nhg^{\frac{n}{2}-1}) (hg^{\frac{n}{2}+1}e_nhg^{\frac{n}{2}+1})\cdots (hg^{n-1}e_nhg^{n-1}) = \\
(hge_nhg) \cdots  (hg^{\frac{n}{2}-1}e_nhg^{\frac{n}{2}-1}) (hg^{\frac{n}{2}+1}e_nhg^{\frac{n}{2}+1})\cdots (hg^{n-1}e_nhg^{n-1}),  
\end{align*}
which can routinely be verified to be equivalent to 
$hg (e_ng)^{\frac{n}{2}-1}g(e_ng)^{\frac{n}{2}-2}e_n= (e_ng)^{\frac{n}{2}-1}g(e_ng)^{\frac{n}{2}-2}e_n$. 
On the other hand, from $he_1e_2\cdots e_n=e_1e_2\cdots e_n$, we have 
\begin{align*}
h (he_nh) (hge_nhg)\cdots  (hg^{n-1}e_nhg^{n-1}) = 
(he_nh) (hge_nhg) \cdots  (hg^{n-1}e_nhg^{n-1}), 
\end{align*} 
a relation that is equivalent to $h(e_ng)^{n-1}e_n=(e_ng)^{n-1}e_n$. 
\end{description}

\smallskip 

So, let us consider the following set $Q$ of monoid relations on the alphabet $B=\{g,h,e\}$: 
\begin{description}
\item $(Q_1)$ $g^n=1$, $h^2=1$ and $hg=g^{n-1}h$; 

\item $(Q_2)$ $e^2=e$ and $ghegh=e$; 

\item $(Q_3)$ $eg^{j-i}eg^{n-j+i} = g^{j-i}eg^{n-j+i}e$, for $1\leqslant i<j\leqslant n$; 

\item $(Q_4)$ $hg(eg)^{n-2}e=(eg)^{n-2}e$, if $n$ is odd; 

\item $(Q_5)$ $hg (eg)^{\frac{n}{2}-1}g(eg)^{\frac{n}{2}-2}e= (eg)^{\frac{n}{2}-1}g(eg)^{\frac{n}{2}-2}e$ 
and $h(eg)^{n-1}e=(eg)^{n-1}e$, if $n$ is even. 
\end{description}
Notice that 
$|Q|=\frac{n^2-n+13+(-1)^n}{2}$. 

Therefore, by considering the mapping $B\longrightarrow\DPC_n$ defined by $g\longmapsto g$, $h\longmapsto h$ and $e\longmapsto e_n$, 
we have: 

\begin{theorem}\label{rankpres} 
The monoid $\DPC_n$ is defined by the presentation $\langle B \mid Q\rangle$ on $3$ generators. 
\end{theorem}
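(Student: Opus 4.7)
The plan is to derive $\langle B \mid Q\rangle$ from the presentation $\langle A \mid R\rangle$ of Theorem \ref{firstpres} by a finite sequence of elementary Tietze transformations, appealing to Proposition \ref{tietze}. The key observation that makes this possible is the identity $e_i = hg^{i-1}e_n hg^{i-1}$ in $\DPC_n$ for every $1 \leq i \leq n$: a direct check in $\D_{2n}$ shows that $hg^{i-1}$ is an involution sending $i$ to $n$ and mapping $\Omega_n\setminus\{i\}$ bijectively onto $\Omega_n\setminus\{n\}$, so conjugation by it carries $e_i$ to $e_n$. By Theorem \ref{firstpres}, each of these equalities is a consequence of $R$, so I may adjoin all $n$ of them to $\langle A \mid R\rangle$ using T1.

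Next, I apply T4 successively for $i = 1, \ldots, n-1$, deleting each generator $e_i$ and substituting $hg^{i-1}e_n hg^{i-1}$ for every remaining occurrence; after renaming $e_n$ to $e$, this yields a presentation on the alphabet $B = \{g,h,e\}$. What remains is to check that, modulo $Q_1$, each rewritten $R$-relation is either a tautology (removable by T2) or already appears in $Q$.

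The bookkeeping, group by group, is the one detailed in the paragraphs preceding the statement. After substitution: the $R_2$ relations reduce to $e^2 = e$; the $R_3$ relations collapse, via $hg^{i-1}\cdot hg^{j-1} = g^{j-i}$ and $h^2 = 1$, to the commutation rules $Q_3$; the relation $ge_1 = e_n g$ of $R_4$ reduces to $ghegh = e$, while the remaining $n-1$ relations $ge_{i+1} = e_i g$ become trivial; the $R_5$ relations all reduce again to $ghegh = e$; and the long telescoping products in $R_6^{\text{o}}$ (resp.\ $R_6^{\text{e}}$) compress to the single relations $Q_4$ (resp.\ $Q_5$) after absorbing the conjugating factors $hg^{k}$. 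The relation $e = hg^{n-1}e hg^{n-1}$, i.e.\ the $i = n$ case of the adjoined identities, is exactly equivalent to $ghegh = e$ using $Q_1$. After removing the duplicated and tautological relations with T2, precisely the relations in $Q$ survive, so Proposition \ref{tietze} gives that $\langle B \mid Q\rangle$ defines $\DPC_n$.

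The main obstacle is the last step: the substitution surrounds every $e$ by dihedral words $hg^{i-1}$, which must then be pushed through the string using $hg = g^{n-1}h$, $h^2 = 1$, $g^n = 1$ so that the simplified expressions match the target $Q$-relations verbatim, rather than some equivalent but formally distinct words. This is routine but delicate in the $R_6^{\text{o}}$ and $R_6^{\text{e}}$ cases, where a product of $n-1$ or $n-2$ conjugates must be rewritten so that adjacent pairs $hg^{k}\cdot hg^{k-1}$ fuse into a single $g$, eventually producing the $(eg)^m$ patterns that appear in $Q_4$ and $Q_5$.
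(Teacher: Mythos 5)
Your proposal is correct and follows essentially the same route as the paper: adjoin the relations $e_i=hg^{i-1}e_nhg^{i-1}$ by T1 (justified via Theorem \ref{firstpres}), eliminate $e_1,\ldots,e_{n-1}$ by T4, and then simplify each rewritten family of relations modulo $Q_1$ with T2 until exactly the relations of $Q$ remain, which is precisely the Tietze-transformation procedure carried out in the paragraphs preceding the theorem. The group-by-group reductions you describe (including $R_4$/$R_5$ collapsing to $ghegh=e$ and the telescoping of $R_6^{\text{o}}$/$R_6^{\text{e}}$ into $Q_4$/$Q_5$) agree with the paper's computations.
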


\bigskip 

\lastpage 

\end{document}